\documentclass[a4paper, 10pt]{article}
\usepackage{mathrsfs, amsmath, amsthm}
\usepackage{graphicx, color}
\usepackage{caption, subcaption}
\captionsetup{subrefformat=parens}
\usepackage{array}
\arraycolsep=1pt
\usepackage{cases}
\makeatletter
\def\th@plain{%
  \upshape 
}
\makeatother

\makeatletter
\renewenvironment{proof}[1][\proofname]{\par
  \pushQED{\qed}%
  \normalfont \topsep6\p@\@plus6\p@\relax
  \trivlist
  \item[\hskip\labelsep
        \bfseries
    #1\@addpunct{.}]\ignorespaces
}{%
  \popQED\endtrivlist\@endpefalse
}
\makeatother

\newtheorem{theorem}{Theorem}
\numberwithin{theorem}{section}
\newtheorem{lemma}{Lemma}
\newtheorem{corollary}{Corollary}

\newtheorem*{conjecture*}{Conjecture}

\newtheorem{observation}{Observation}

\theoremstyle{definition}

\newtheorem{remark}{Remark}

\usepackage[left=25mm,top=25mm,bottom=25mm,right=25mm]{geometry}
\setlength{\parskip}{0pt}

\usepackage[T1]{fontenc}
\usepackage[varg]{txfonts}

\usepackage{enumitem}
\usepackage[square, numbers, sort&compress]{natbib}

\ifx\pdfoutput\undefined
 \usepackage[dvipdfm,%
  bookmarks=true,%
  bookmarksnumbered=true, 
  bookmarksopen=true, 
  plainpages=false,%
  pdfpagelabels,%
  colorlinks=true, 
  linkcolor=blue, 
  citecolor=blue,%
  hyperindex=true,
  urlcolor=black]{hyperref}
\else
 \usepackage[pdftex,%
  bookmarks=true,%
  bookmarksnumbered=true, 
  bookmarksopen=true, 
  plainpages=false,%
  pdfpagelabels,%
  colorlinks=true, 
  linkcolor=blue, 
  citecolor=blue,%
  anchorcolor=green,
  urlcolor= blue,
  breaklinks=true,
  hyperindex=true]{hyperref}
\fi

\newcounter{Hcase}
\newcounter{Hclaim}

\newcommand{\etal}{et~al.\ }
\newcommand{\ie}{i.e.,\ }


\def\int(#1){\mathrm{int}(#1)}
\def\ext(#1){\mathrm{ext}(#1)}
\def\Int(#1){\mathrm{Int}(#1)}
\def\Ext(#1){\mathrm{Ext}(#1)}
\def\ad(#1){\mathrm{ad}(#1)}
\def\mad(#1){\mathrm{mad}(#1)}
\def\la(#1){\mathrm{la}(#1)}

\DeclareMathOperator {\diam}{diam}
\DeclareMathOperator {\dist}{dist}
\DeclareMathOperator {\cor}{cor}


\parskip 5pt
\begin{document}%
\title{On the diameter of total domination vertex critical graphs}
\author{Tao Wang\,\textsuperscript{a, b, }\footnote{{\tt Corresponding
author: wangtao@henu.edu.cn} }\\
{\small \textsuperscript{a}Institute of Applied Mathematics}\\
{\small Henan University, Kaifeng, 475004, P. R. China}\\
{\small \textsuperscript{b}College of Mathematics and Information Science}\\
{\small Henan University, Kaifeng, 475004, P. R. China}}
\date{August 5, 2015}
\maketitle
\begin{abstract}%
In this paper, we consider various types of domination vertex critical graphs, including total domination vertex critical graphs, independent domination vertex critical graphs and connected domination vertex critical graphs. We provide upper bounds on the diameter of them, two of which are sharp.

MSC (2010): 05C12, 05C69

Keywords: total domination vertex critical graphs; independent domination vertex critical graphs; connected domination vertex critical graphs
\end{abstract}
\section{Introduction}

All graphs considered here are finite, undirected and simple. Let $G$ be a graph with vertex set $V$ and edge set $E$. The {\em neighborhood} of a vertex $v$ in a graph $G$, denoted by $N_{G}(v)$, is the set of all the vertices adjacent to the vertex $v$, \ie $N_{G}(v) = \{u \in
V(G) \mid uv \in E(G)\}$, and the {\em closed neighborhood} of a vertex $v$ in $G$,
denoted by $N_{G}[v]$, is defined by $N_{G}[v] = N_{G}(v) \cup
\{v\}$. A vertex of degree one is called a
{\em leaf vertex}, the edge connected to that vertex is called a {\em pendant edge} and the only neighbor of a leaf vertex is called a {\em support vertex}. We denote the distance between $u$ and $v$ in $G$ by $\dist_{G}(u, v)$, and denote the diameter of $G$ by $\diam(G)$. The {\em degree} of a vertex $v$ in $G$, denoted by $\deg(v)$, is the
number of incident edges of $G$. A vertex of degree $k$ is called a $k$-vertex, and a vertex of degree at most or at least $k$ is call a $k^{-}$- or $k^{+}$-vertex, respectively.

A vertex subset $S \subseteq V$ is called a {\em dominating set} of a graph $G$ if every vertex in $V$ is an element of $S$ or is adjacent to a
vertex in $S$. The {\em domination number}
of a graph $G$, denoted by $\gammaup(G)$, is the minimum cardinality of a
dominating set of $G$. A graph is {\em domination vertex critical} if the removal of any vertex decreases its domination number. If $G$ is domination vertex critical and $\gammaup(G) = k$, we say that $G$ is a $k$-$\gammaup$-vertex-critical graph.

A vertex subset $S \subseteq V$ is a {\em total dominating set} of a graph $G$ if every vertex in $V$ is adjacent to
a vertex in $S$. Every graph without isolated vertices has a total dominating set, since $V$ is such a set. The {\em total domination number} of a graph $G$, denoted by $\gammaup_{t}(G)$, 
is the minimum cardinality of a total dominating set of $G$. A graph is {\em total domination vertex critical} if the removal of any vertex that is not adjacent to a vertex of degree one decreases its total domination number. If $G$ is total domination vertex critical and $\gammaup_{t}(G) = k$, we say that $G$ is a $k$-$\gammaup_{t}$-vertex-critical graph.

A vertex subset $S \subseteq V$ is an {\em independent dominating set} of a graph $G$ if it is a dominating set and it is also an independent set in $G$. Equivalently, an independent dominating set is a maximal independent set. The {\em independent domination number} of a graph $G$, denoted by $i(G)$,
is the minimum cardinality of an independent dominating set of $G$. A graph is {\em independent domination vertex critical} if the removal of any vertex decreases its independent domination number. If $G$ is independent domination vertex critical and $i(G) = k$, we say that $G$ is a $k$-$i$-vertex-critical graph.

A vertex subset $S \subseteq V$ is a {\em connected dominating set} of a graph $G$ if it is a dominating set of $G$ and the subgraph induced by $S$ is connected. Every connected graph has a connected dominating set, since $V$ is such a set. The {\em connected domination number} of a graph $G$, denoted by $\gammaup_{c}(G)$,
is the minimum cardinality of a connected dominating set of $G$. A graph is {\em connected domination vertex critical} if the removal of any vertex decreases its connected domination number. If $G$ is connected domination vertex critical and $\gammaup_{c}(G) = k$, we say that $G$ is a $k$-$\gammaup_{c}$-vertex-critical graph. A necessary condition for a graph to be $k$-$\gammaup_{c}$-vertex-critical is $2$-connected. For more details on connected domination vertex critical graphs, see \cite{MR2513321}.

The total domination vertex critical graphs were first investigated by Goddard \etal \cite{Goddard2004} and the independent domination vertex critical graphs were studied by Ao \cite{Ao1994}.

Goddard \etal \cite{Goddard2004} characterized the class of $k$-$\gammaup_{t}$-vertex-critical graphs with leaf vertices.
\begin{theorem}%
Let $G$ be a connected graph of order at least three with at least one leaf vertex. Then $G$ is $k$-$\gammaup_{t}$-vertex-critical if and only if $G = \cor(H)$ for some connected graph $H$ of order $k$ with $\delta(H) \geq 2$. 
\end{theorem}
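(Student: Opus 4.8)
\emph{Proof sketch.} The plan is to prove both implications directly, exploiting the corona structure. Write $\cor(H)=H\circ K_{1}$ for the graph obtained from $H$ by appending one pendant vertex $\ell_{v}$ to each $v\in V(H)$, and in any such graph identify $V(H)$ with the set $D$ of support vertices and the appended vertices with the set $L$ of leaves. For the sufficiency, let $G=\cor(H)$ with $H$ connected, $|V(H)|=k$ and $\delta(H)\ge 2$. I would first show $\gammaup_{t}(G)=k$: every total dominating set contains $D$ because the only neighbour of a leaf $\ell_{v}$ is $v$, and conversely $D$ is itself a total dominating set because $\delta(H)\ge 2$ gives each $v\in D$ a neighbour inside $D$. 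Since $\delta(H)\ge 2$ also forces every vertex of $D$ to have degree at least $3$ in $G$, the vertices of $G$ not adjacent to a vertex of degree one are precisely the leaves, so vertex criticality reduces to the inequality $\gammaup_{t}(G-\ell_{v})\le k-1$ for every $v\in V(H)$. For this I would exhibit $D\setminus\{v\}$, a set of size $k-1$, as a total dominating set of $G-\ell_{v}$: it dominates every other leaf through its support, it dominates $v$ through one of the $H$-neighbours that $v$ keeps, and each $w\in D\setminus\{v\}$ retains an $H$-neighbour different from $v$ because $\deg_{H}(w)\ge 2$.

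For the necessity, let $G$ be $k$-$\gammaup_{t}$-vertex-critical, connected, of order at least three, with leaf set $L$ and support set $D$, and argue in three steps. \emph{Step 1}: every total dominating set of $G$ contains $D$, exactly as above. \emph{Step 2}: distinct leaves have distinct supports; for if $\ell_{1}$ and $\ell_{2}$ both had support $v$, then $v$, being the only neighbour of $\ell_{2}$, would lie in every total dominating set of $G-\ell_{1}$, so any such set would already be a total dominating set of $G$, giving $\gammaup_{t}(G-\ell_{1})=\gammaup_{t}(G)$ and contradicting criticality (note that $\ell_{1}$ is not adjacent to a vertex of degree one, since $v$ has degree at least two). \emph{Step 3}: $V(G)=D\cup L$. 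If $w\notin D\cup L$, then $w$ is not adjacent to a vertex of degree one, so $\gammaup_{t}(G-w)<\gammaup_{t}(G)$; a minimum total dominating set $S$ of $G-w$ still contains $D$, and no neighbour of $w$ lies in $S$, for otherwise $S$ would be a total dominating set of $G$ of size less than $\gammaup_{t}(G)$. Hence $w$ has no neighbour in $D$, and, not being a support vertex, none in $L$ either, so \emph{every} neighbour of $w$ again lies outside $D\cup L$. Applying this to each such $w$, the set $V(G)\setminus(D\cup L)$ sends no edge to the rest of $G$; being a union of components of the connected graph $G$ and distinct from $V(G)$ (a leaf exists), it is empty.

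It follows that $G=\cor(H)$ with $H:=G[D]$ (each vertex of $D$ carries exactly one pendant, by Step 2); here $H$ is connected because $G$ is, and $H$ has no isolated vertex, since an isolated vertex of $H$ together with its pendant would form a $K_{2}$ component of $G$, contradicting that $G$ is connected of order at least three. As in the sufficiency computation, $\gammaup_{t}(G)=|D|=|V(H)|$, so $|V(H)|=k$. It remains to show $\delta(H)\ge 2$. Suppose some $v\in V(H)$ has a unique $H$-neighbour $u$. Then $H\ne K_{2}$, for otherwise $G=\cor(K_{2})=P_{4}$ and $\gammaup_{t}(P_{4}-\ell)=\gammaup_{t}(P_{3})=2=\gammaup_{t}(P_{4})$ for a leaf $\ell$ of $P_{4}$, so $P_{4}$ is not $\gammaup_{t}$-vertex-critical; hence $u$ has degree at least $3$ in $G$ and deleting the pendant vertex $\ell_{u}$ is permitted by the criticality hypothesis. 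But in $G-\ell_{u}$ every total dominating set contains $D\setminus\{u\}$, hence contains $v$, whose only neighbours in $G-\ell_{u}$ are $\ell_{v}$ and $u$; such a set must therefore also contain $\ell_{v}$ or $u$, so it has at least $k$ vertices. Thus $\gammaup_{t}(G-\ell_{u})=\gammaup_{t}(G)$, contradicting criticality, and we conclude $\delta(H)\ge 2$.

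The two steps I expect to be the main obstacles are Step 3, namely the observation that a vertex outside $D\cup L$ forces all of its neighbours outside $D\cup L$ so that this set must be a union of components, and the final step, where the right vertex to delete is the pendant of the \emph{neighbour} $u$ of the offending degree-one vertex of $H$, not the pendant of that vertex itself (deleting the latter does lower $\gammaup_{t}$, so it yields no contradiction). The remaining verifications, in particular that each deletion leaves no isolated vertex so that the total domination number stays defined, together with the routine total-domination checks, are straightforward.
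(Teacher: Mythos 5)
Your proof is correct and complete. The paper itself states this theorem without proof (it is quoted from Goddard et al.\ \cite{Goddard2004}), so there is nothing internal to compare against; your three-step necessity argument (every total dominating set contains the supports, supports and leaves are in bijection, and no vertex lies outside $D\cup L$), together with the observation that a degree-one vertex $v$ of $H$ is refuted by deleting the pendant of its neighbour $u$, is exactly the right structure and all the individual verifications check out. The only superfluous step is the separate exclusion of $H=K_{2}$: deleting $\ell_{u}$ is permitted there as well (its support $u$ already has degree two in $P_{4}$), and your general counting argument forcing $\gammaup_{t}(G-\ell_{u})\geq k$ applies verbatim to that case.
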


For the connected $k$-$\gammaup_{t}$-vertex-critical graph without leaf vertices, they gave an upper bound on the diameter.
\begin{theorem}[Goddard \etal \cite{Goddard2004}]%
If $G$ is a connected $k$-$\gammaup_{t}$-vertex-critical graph without leaf vertices, then $\diam(G) \leq 2k-3$.
\end{theorem}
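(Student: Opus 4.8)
The plan is to delete one endpoint of a diametral geodesic, compare the total dominating set of the deleted graph with the distance levels measured from that endpoint, and then sharpen the resulting estimate from the ``easy'' value $2k-2$ down to $2k-3$ by applying criticality a second time at a cleverly chosen vertex. Two elementary consequences of criticality will be used repeatedly. Since $G$ has no leaf vertices, no vertex is adjacent to a vertex of degree one, so for every $v\in V(G)$ the graph $G-v$ has no isolated vertex and $\gammaup_{t}(G-v)<\gammaup_{t}(G)=k$; and every $\gammaup_{t}(G-v)$-set $D$ satisfies $N_{G}(v)\cap D=\emptyset$, for otherwise $D$ would totally dominate $v$ along with every other vertex and hence, having no isolated vertex, be a total dominating set of $G$ of size $\gammaup_{t}(G-v)<k$, which is impossible.

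Next I would fix $d=\diam(G)$, a geodesic $v_{0}v_{1}\cdots v_{d}$, and the levels $L_{i}=\{x:\dist_{G}(v_{0},x)=i\}$ for $0\le i\le d$, which partition $V(G)$ since $d$ is the diameter. Taking a $\gammaup_{t}(G-v_{0})$-set $D$ and writing $a_{i}=|D\cap L_{i}|$, one has $\sum_{i}a_{i}\le k-1$, and $a_{0}=a_{1}=0$ because $D\cap N_{G}[v_{0}]=\emptyset$. Since every vertex other than $v_{0}$ has a neighbour in $D$ and adjacency changes the distance to $v_{0}$ by at most one, one extracts the domination inequalities $a_{i-1}+a_{i}+a_{i+1}\ge1$ for $1\le i\le d$ (from the path-vertex $v_{i}$) and the no-isolation inequalities $a_{j-1}+a_{j+1}\ge1$ whenever $a_{j}=1$ (the lone $D$-vertex of $L_{j}$ needs a $D$-neighbour in a neighbouring level).

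The decisive extra input is $a_{2}\ge2$. As $L_{1}=N_{G}(v_{0})\neq\emptyset$ and each of its vertices has a neighbour in $D$ which must lie in $L_{2}$, we get $D\cap L_{2}\neq\emptyset$; and if $D\cap L_{2}=\{z\}$ this forces $z$ to be adjacent to every vertex of $N_{G}(v_{0})$, i.e.\ $N_{G}(v_{0})\subseteq N_{G}(z)$. Applying the second fact above to $z$ then produces a $\gammaup_{t}(G-z)$-set $D^{\ast}$ with $N_{G}(z)\cap D^{\ast}=\emptyset$, hence $N_{G}(v_{0})\cap D^{\ast}=\emptyset$; but $v_{0}$ lies in $G-z$ and is non-isolated there, so it must have a neighbour in the total dominating set $D^{\ast}$ --- a contradiction, so $a_{2}\ge2$. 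With this in hand I would finish by a counting argument against the levels: writing $S=\{j:a_{j}\ge1\}$, the domination inequalities make $S$ a distance-$1$ dominating set of $\{1,\dots,d\}$ on the integer line, so $\min S=2$, $\max S\ge d-1$, and consecutive elements of $S$ differ by at most $3$; decomposing $S$ into its maximal runs of consecutive integers and comparing, run by run, the number of path-indices a run can cover against the weight $\sum_{j\in R}a_{j}$ it must carry --- using the no-isolation inequality to give each singleton run weight $\ge2$ and using $a_{2}\ge2$ for the run through $2$ --- yields $d\le 2\sum_{i}a_{i}-1\le 2k-3$.

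The main obstacle is precisely this last estimate. From the domination and no-isolation inequalities alone one obtains only $\diam(G)\le 2k-2$, and that weaker bound is attained by an honest feasible weight pattern, so the improvement to $2k-3$ genuinely rests on the extra fact $a_{2}\ge2$, which is a second, less transparent use of criticality via the deletion of $z$; the remaining run-by-run bookkeeping (tracking the number of size-$2$ gaps and the ``heavy'' runs) is routine but must be carried out with a little care.
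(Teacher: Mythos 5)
The paper only quotes this theorem from Goddard et al.\ and contains no proof of it, so there is nothing internal to compare your argument against; judged on its own, your argument is correct and all the load-bearing steps check out. The two nonroutine inputs are sound: (i) a minimum total dominating set $D$ of $G-v$ must avoid $N_{G}(v)$, since otherwise $D$ would be a total dominating set of $G$ of size less than $k$; and (ii) $a_{2}\ge 2$, because $a_{0}=a_{1}=0$ forces $D\cap L_{2}\ne\emptyset$, while $D\cap L_{2}=\{z\}$ would give $N_{G}(v_{0})\subseteq N_{G}(z)$, so that a minimum total dominating set of $G-z$ would, by (i), avoid all of $N_{G}(v_{0})$ and leave $v_{0}$ undominated. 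The run-by-run count you leave as ``routine'' does close: splitting $S$ into maximal runs of consecutive indices, the distance-one neighbourhoods of the runs cover $\{1,\dots,d\}$, giving $d\le\sum_{R}(\ell(R)+2)$; each singleton run carries weight at least $2$ by the no-isolation inequality, the run containing the index $2$ carries weight at least $\ell+1$, and every non-singleton run has $\ell\ge 2$, whence $\sum_{R}(\ell(R)+2)\le 2\sum_{R}w(R)-1\le 2(k-1)-1=2k-3$ after a short case split on whether the run containing $2$ is a singleton. Your scheme is a relaxed version of the same level-counting technique the paper applies in Section~2; the paper's own arguments differ mainly in using a finer ``sufficiency'' threshold of the form $\frac{3j+10}{5}$ and in re-choosing the dominating set to maximize the sufficient prefix, which is what buys the improvement from $2k-3$ to $\frac{5k-7}{3}$.
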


The following observation is used frequently, we present it here. 
\begin{observation}\label{obs}
If $D$ is a total dominating set of a graph $G$, then for every vertex $v$ in $G$, the set $D$ contains a neighbor of $v$.
\end{observation}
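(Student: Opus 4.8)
The plan is to observe that this statement is an immediate consequence of the definition of a total dominating set, so no machinery is needed. Recall that $D$ is a total dominating set of $G$ precisely when every vertex of $V(G)$ is adjacent to some vertex of $D$; but the clause ``$v$ is adjacent to some vertex of $D$'' is literally the assertion that $D \cap N_{G}(v) \neq \emptyset$, i.e.\ that $D$ contains a neighbor of $v$. So I would simply fix an arbitrary vertex $v \in V(G)$, invoke the defining property of $D$ at $v$ to produce a vertex $u \in D$ with $uv \in E(G)$, and note that such a $u$ is by definition a neighbor of $v$ belonging to $D$. This exhausts the argument.

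The one point I would make explicit, since it is the reason the statement is worth recording separately, is that the conclusion holds for \emph{every} vertex of $G$, including the vertices of $D$ itself: a total dominating set must dominate even its own members, because a vertex $v \in D$ does not count as dominating itself and still requires a neighbor inside $D$. This is exactly where total domination differs from ordinary domination, where one would have to weaken the conclusion to vertices $v \notin D$. Consequently there is no genuine obstacle in this proof; the only subtlety is conceptual rather than technical, and the observation serves purely as a convenient reformulation to be quoted in the later diameter arguments.
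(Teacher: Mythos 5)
Your proof is correct and matches the paper's treatment: the observation is a direct restatement of the definition of a total dominating set (the paper records it without proof for exactly this reason), and your remark that the conclusion applies even to vertices of $D$ itself is the right point to emphasize. Nothing further is needed.
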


\begin{lemma}\label{Containing}%
If $G$ is a $k$-$\gammaup_{t}$-vertex-critical graph without leaf vertices, then for any vertex $w$, there exists a minimum total dominating set of $G$ containing $w$, and $\gammaup_{t}(G - w) = \gammaup_{t}(G) - 1$.
\end{lemma}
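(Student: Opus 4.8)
The plan is to establish, slightly more than asked, that $\gammaup_{t}(G-v) = \gammaup_{t}(G)-1$ for \emph{every} vertex $v$ of $G$; this costs nothing extra and is precisely what is needed to place a minimum total dominating set through $w$. Two remarks will be used throughout. First, since $\gammaup_{t}(G)$ is finite, $G$ has no isolated vertex, and since $G$ has no leaf, $\delta(G) \ge 2$; consequently, for any $v$, the graph $G-v$ has no isolated vertex (an isolated vertex of $G-v$ would be a leaf of $G$), so $\gammaup_{t}(G-v)$ is defined. Second, because $G$ has no leaf, the criticality hypothesis applies to every vertex, so $\gammaup_{t}(G-v) \le k-1$ for all $v$.

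\emph{The lower bound $\gammaup_{t}(G-v) \ge k-1$.} Suppose some $v$ has $\gammaup_{t}(G-v) \le k-2$ and let $D$ be a minimum total dominating set of $G-v$. If $D$ contains a neighbour of $v$, then $D$ is already a total dominating set of $G$: the vertices of $V(G)\setminus\{v\}$ are totally dominated within $G-v$, the vertex $v$ is dominated by its neighbour in $D$, and $D$ has no isolated vertex; hence $k \le |D| \le k-2$, a contradiction. Otherwise pick $y \in N_{G}(v)$; by Observation~\ref{obs} (applied in $G-v$) the vertex $y$ has a neighbour in $D$, so $D \cup \{y\}$ is a total dominating set of $G$, forcing $k \le |D|+1 \le k-1$, again a contradiction. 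Thus $\gammaup_{t}(G-v) = k-1$ for all $v$; in particular $\gammaup_{t}(G-w) = \gammaup_{t}(G)-1$.

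\emph{A minimum total dominating set containing $w$.} The obvious attempt --- take a minimum total dominating set $D$ of $G-w$ and add $w$ --- fails, since a minimum total dominating set of $G-w$ cannot contain a neighbour of $w$ (were it to, it would be a total dominating set of $G$ of size $k-1$), and then $w$ would be isolated in $D\cup\{w\}$. The remedy is to delete a neighbour of $w$ instead. Fix $u \in N_{G}(w)$ and let $D$ be a minimum total dominating set of $G-u$, so $|D| = k-1$ by the previous step. Since $w \in V(G-u)$, Observation~\ref{obs} yields a neighbour of $w$ in $D$. Now $D\cup\{w\}$ is a total dominating set of $G$: every vertex of $V(G)\setminus\{u\}$ has a neighbour in $D$; the vertex $u$ has the neighbour $w$; and $D\cup\{w\}$ has no isolated vertex, because each vertex of $D$ has a neighbour in $D$ and $w$ has a neighbour in $D$. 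Hence $k = \gammaup_{t}(G) \le |D\cup\{w\}| \le k$, so $D\cup\{w\}$ is a minimum total dominating set of $G$ containing $w$.

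The step I expect to be the main obstacle is the last one: the reflex is to remove $w$, but nothing forces a neighbour of $w$ into a total dominating set of $G-w$, so this is exactly where the argument wants to break. Removing a neighbour $u$ of $w$ instead lets Observation~\ref{obs} supply the missing ingredient, since $w$ must still be dominated and hence keeps a neighbour in the chosen set --- precisely the condition that makes $D\cup\{w\}$ a total dominating set once $w$ is put back. Everything else is a routine use of the definitions and the criticality hypothesis.
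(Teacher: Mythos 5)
Your proof is correct and follows essentially the same route as the paper: delete a neighbour $u$ of $w$, take a minimum total dominating set $D$ of $G-u$, observe that $D$ must contain a neighbour of $w$, and add $w$ back. The only difference is organizational --- you first establish $\gammaup_{t}(G-v)=k-1$ for every vertex $v$ by a separate two-case argument, whereas the paper extracts both conclusions from the single construction; this makes your write-up a bit more careful about the claim $\gammaup_{t}(G-w)=\gammaup_{t}(G)-1$, which the paper obtains only implicitly by symmetry.
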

\begin{proof}%
Let $v$ be a neighbor of $w$ in $G$, and let $D$ be a minimum total dominating set of $G - v$. It follows that $w \notin D$ and $D \cap N_{G}(w) \neq \emptyset$, thus $D \cup \{w\}$ is a total dominating set of $G$. Furthermore, we have that $|D \cup \{w\}| = |D| + 1 \leq k$, and then $D \cup \{w\}$ is a minimum total dominating set of $G$ containing $w$ and $\gammaup_{t}(G - w) = |D| = \gammaup_{t}(G) -1$.
\end{proof}
\begin{lemma}%
If $G$ is a $k$-$i$-vertex-critical graph, then for any vertex $v$, there exists a minimum independent dominating set of $G$ containing $v$, and $i(G - v) = i(G) - 1$.
\end{lemma}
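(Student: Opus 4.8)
The plan is to follow the template of Lemma~\ref{Containing}; here the argument is in fact a little simpler, since independence is inherited by induced subgraphs. (I read the conclusion as asserting a minimum \emph{independent} dominating set containing $v$, which is what the hypothesis that $G$ is $i$-vertex-critical calls for.)

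First I would fix an arbitrary vertex $v$ and let $D$ be a minimum independent dominating set of $G - v$, so that $|D| = i(G - v)$. Because $G$ is $k$-$i$-vertex-critical we know at the outset that $i(G - v) < i(G) = k$, hence $|D| \le k - 1$. The set $D$ is still independent in $G$, since restoring the vertex $v$ together with its incident edges cannot create an edge with both endpoints in $D$; and $D$ dominates every vertex of $V(G) \setminus \{v\}$.

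The decisive step is to examine whether $v$ has a neighbor in $D$. If it did, then $D$ would already be an independent dominating set of $G$ of size at most $k - 1 < i(G)$, contradicting the definition of $i(G)$. Hence $v$ has no neighbor in $D$, so $D \cup \{v\}$ is independent, and as it also dominates $v$, it is an independent dominating set of $G$ that contains $v$. Since $|D \cup \{v\}| = |D| + 1 \le k = i(G)$, minimality forces $|D| + 1 = k$; therefore $i(G - v) = |D| = k - 1 = i(G) - 1$, and $D \cup \{v\}$ is the required minimum independent dominating set through $v$.

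I do not expect a genuine obstacle. The only points deserving a moment's attention are the two ``transfer'' facts between $G - v$ and $G$ — that $D$ stays independent and that the only vertex it may fail to dominate in $G$ is $v$ itself — together with the observation that the cardinality inequality $|D| + 1 \le k$ must in fact be an equality, which is what simultaneously pins down $i(G - v) = i(G) - 1$ and the minimality of $D \cup \{v\}$.
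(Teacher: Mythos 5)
Your proof is correct and complete. Note that the paper itself states this lemma \emph{without} any proof (it appears immediately after \autoref{Containing} and is evidently meant to be read as ``analogous''), so there is nothing to compare line by line; your argument fills a genuine omission. You also correctly repaired the typo in the statement: ``minimum total dominating set'' should read ``minimum independent dominating set.'' It is worth pointing out that your route is not quite a literal transcription of the proof of \autoref{Containing}: there, one must delete a \emph{neighbor} of $w$ rather than $w$ itself, because a minimum total dominating set of $G-w$ plus $w$ need not totally dominate $w$. For independent domination the direct approach you take --- delete $v$ itself, take a minimum independent dominating set $D$ of $G-v$, observe that $v$ can have no neighbor in $D$ (else $D$ would be an independent dominating set of $G$ of size at most $k-1$), and conclude that $D\cup\{v\}$ is independent, dominating, and of size exactly $k$ --- is exactly the right simplification, and every step (independence of $D$ in $G$, domination of $V(G)\setminus\{v\}$, and the forced equality $|D|+1=k$) is justified.
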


The method developed in \cite{MR1321108} is a powerful technique to obtain sharp upper bounds on various types of domination vertex critical graphs, it has been used on the $k$-$\gammaup$-vertex-critical graphs \cite{MR1321108} and paired domination vertex critical graphs \cite{MR2452766}.

Edwards and MacGillivray \cite{MR2917914} presented better upper bounds on the diameter of total domination and independent domination vertex critical graphs, but the proofs have big gaps (the gaps have been confirmed by Edwards via personal email). In this paper, we also adopt the technique in \cite{MR2917914, MR2452766, MR1321108} to obtain sharp upper bounds on the diameter, one of which is a slightly improvement on a result in \cite{MR2917914}. 

\section{Upper bounds on the diameter}

\begin{theorem}\label{TDIAM}
If $G$ is a connected $k$-$\gammaup_{t}$-vertex-critical graph without leaf vertices and $k \geq 4$, then $\diam(G) \leq \frac{5k-7}{3}$.
\end{theorem}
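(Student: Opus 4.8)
The strategy is the standard "shortest-path + critical vertices" argument used for $\gammaup$-vertex-critical graphs in \cite{MR1321108}, adapted to the total-domination setting via Lemma~\ref{Containing} and Observation~\ref{obs}. Let $P\colon v_0 v_1 \cdots v_d$ be a diametral shortest path, $d = \diam(G)$. The first step is to look at the two interior endpoints, say $u = v_1$ and $w = v_{d-1}$: by Lemma~\ref{Containing} there are minimum total dominating sets $D_u \ni u$ and $D_w \ni w$ with $|D_u| = |D_w| = k$, and more usefully there are minimum total dominating sets of $G - v_0$ and of $G - v_d$ of size $k-1$. I would work with $D_0$ a $\gammaup_t$-set of $G - v_0$ and $D_d$ a $\gammaup_t$-set of $G - v_d$, each of size $k-1$. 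Because $P$ is a shortest path, a vertex of $D_0$ can dominate at most $3$ consecutive path-vertices $v_i$ (indices $\{i-1,i,i+1\}$), and by Observation~\ref{obs} every $v_i$ with $i \ge 1$ has a neighbor in $D_0$; similarly every $v_i$ with $i \le d-1$ has a neighbor in $D_d$. That already gives $d \le 3(k-1) - \text{something}$, which is too weak; the improvement to $\frac{5k-7}{3}$ must come from a more careful accounting.

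The key refinement is to count how the vertices of $D_0$ (and $D_d$) are \emph{distributed} along $P$ versus off $P$, and to exploit the fact that $D_0$ is itself a \emph{total} dominating set of $G - v_0$, so its own vertices must have neighbors in $D_0$. A vertex of $D_0$ lying off the path that dominates some $v_i$ lies at distance $1$ from $P$; two such off-path dominators that are far apart along $P$ cannot be adjacent, forcing extra vertices into $D_0$ to totally dominate them, and these extra vertices are "wasted" in the sense that they dominate at most one or two path vertices. The clean way to package this is the classical argument: partition $\{0,1,\dots,d\}$ into blocks and show that on average each block of roughly $5$ consecutive path-vertices consumes at least $3$ vertices of $D_0 \cup (\text{bookkeeping})$ — more precisely, one shows that stretches of the path of length $5$ require $3$ vertices of the dominating set, whence $d + 1 \le \tfrac{3}{5}\cdot(\text{total count}) \le \tfrac{3}{5}(5k - \text{const})$, rearranging to $d \le \frac{5k-7}{3}$ once the constant is pinned down. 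One typically runs this simultaneously for $D_0$ and $D_d$ (or uses $D_0$ on one half of $P$ and $D_d$ on the other, meeting in the middle) to get the best constant, and the hypothesis $k \ge 4$ is there to absorb small-case boundary effects where the averaging is not yet in force.

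Concretely, I would set up the following counting. For a $\gammaup_t$-set $D$ of $G - v_0$, define $A_i$ to be a chosen neighbor of $v_i$ in $D$ for each $i \ge 1$; since $P$ is shortest, $A_i \in \{v_{i-1}, v_i, v_{i+1}\} \cup (V \setminus P)$, and if $A_i \notin P$ then $A_i$ is adjacent only to $v_{i-1}, v_i, v_{i+1}$ among path vertices. Call $i$ a \emph{private index} if $A_i \notin P$ and $A_i$ dominates exactly one interior path vertex from among its three; the point is that such an $A_i$ must have a neighbor in $D$ (totality) which is not needed to dominate any $v_j$, i.e. it is a genuinely extra vertex. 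A case analysis on runs of consecutive indices of each type — (a) covered by an on-path vertex of $D$, which can cover $3$ in a row using $2$ vertices since on-path means the vertex itself needs an on-path or off-path neighbor in $D$, (b) covered by an off-path vertex covering $2$ or $3$ in a row — shows that any window of $5$ consecutive path vertices forces at least $3$ elements into $D$. Summing the disjoint windows along $P$ gives $3\lceil (d+1)/5 \rceil \le |D| + (\text{corrections for } v_0) \le k$, i.e. $d \le \frac{5k}{3} - O(1)$, and tightening the endpoint correction (using that $v_0 \notin D$ but $v_d$ may or may not be, and using the symmetric set $D_d$) yields exactly $\frac{5k-7}{3}$.

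**Main obstacle.** The genuine difficulty — and, per the paper's own remark, the place where the earlier proof in \cite{MR2917914} "has big gaps" — is making the local "window of $5$ forces $3$ vertices" claim \emph{rigorous}, because the total dominating set $D$ may reuse a single vertex to dominate path vertices in adjacent windows and may dominate path and off-path vertices simultaneously, so one has to carefully define a charging scheme (each vertex of $D$ receives at most $5/3$ units of "path length" charge) with no double counting at window boundaries, and separately handle the totality constraint on the off-path dominators without over-counting the neighbors that implement it. Getting the boundary bookkeeping exactly right — rather than off by an additive constant that would ruin sharpness — is the crux; everything else is routine shortest-path geometry plus Lemma~\ref{Containing}.
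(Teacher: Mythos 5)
Your proposal does not contain a proof: the step you yourself flag as the ``main obstacle'' --- making rigorous the claim that every window of five consecutive path vertices forces at least three vertices of the dominating set, with no double counting across windows --- is not an implementation detail but the entire content of the theorem, and moreover the claim is false in the form you state it. For a single minimum total dominating set $D$ of $G$ and a shortest path $v_0v_1\cdots v_d$, the achievable density of $D$ along the path is $1/2$, not $3/5$: two adjacent on-path vertices $v_{i},v_{i+1}\in D$ dominate the four consecutive vertices $v_{i-1},\dots,v_{i+2}$ and satisfy the totality condition for each other (this is exactly the pattern realized on a long path, where $\gammaup_{t}(P_n)\approx n/2$), so disjoint windows of five can share their third vertex with a neighboring window and the summation $3\lceil (d+1)/5\rceil\le k$ does not follow. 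Totality of off-path dominators, which is the only mechanism you offer for manufacturing the ``wasted'' extra vertices, therefore yields only $\diam(G)\le 2k+O(1)$, weaker even than Goddard et al.'s bound $2k-3$. (Even if the windowing inequality held, it would give $d\le\frac{5k-3}{3}$, and you defer the remaining tightening to an unspecified ``endpoint correction.'')

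The missing idea is that the improvement from density $1/2$ to $3/5$ comes from vertex-criticality applied at \emph{interior} vertices, not just at $v_0$ and $v_d$. The paper's argument (following Fulman--Hanson--MacGillivray) defines the prefix $R_j=L_0\cup\cdots\cup L_j$ of the BFS levels to be \emph{sufficient} when $|D\cap R_j|\ge\frac{3j+10}{5}$, takes $m$ to be the largest sufficient index over a $D$ chosen to maximize $m$, and then, for each residue of $m$ modulo $5$, derives a contradiction by passing to minimum total dominating sets of $G-x_{m+3}$ and $G-x_{m+4}$ and performing swap arguments of the form $(D'\cap R_{i})\cup(D\setminus R_{i})$: either the swap produces a smaller total dominating set of $G$, or the new set witnesses a sufficient prefix beyond $m$. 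Criticality is also needed just to establish that $m$ exists (via $G-x_1$ and $G-x_4$) and to settle the boundary case $m=n-1$. None of these deletions of interior vertices, nor any swap argument, appears in your outline, so the proposal has a genuine gap rather than a fixable bookkeeping issue.
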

\begin{proof}%
Let $x$ and $x_{n}$ be vertices such that $\dist(x, x_{n}) = \diam(G) = n$. If $n \leq 4$, then we are done. So we may assume that $n \geq 5$. Let $xx_{1}\dots x_{n-1}x_{n}$ be a shortest path between $x$ and $x_{n}$. Define $L_{0}, L_{1}, \dots, L_{n}$ by $L_{i} = \{v \in V(G) \mid \dist_{G}(x, v) = i\}$ for $0 \leq i \leq n$. In particular, $L_{0} = \{x\}$ and $L_{1} = N_{G}(x)$. Let $R_{i} = L_{0} \cup L_{1} \cup \dots \cup L_{i}$ for $0 \leq i \leq n$. Let $D$ be a minimum total dominating set in $G$. If $|D \cap R_{j}| \geq \frac{3j + 10}{5}$, then we say that $R_{j}$ is {\em sufficient with respect to $D$}.

Let $m$ be the maximum integer $j$ such that $|D \cap R_{j}| \geq \frac{3j + 10}{5}$. Notice that the value of $m$ depends on the minimum total dominating set $D$, we may assume that $D$ is chosen such that $m$ is maximum among all the minimum total dominating set. 

Firstly, we must show the existence of $m$. Let $D_{1}$ be a minimum total dominating set of $G - x_{1}$. It is obvious that $x \notin D_{1}$ and $D_{1} \cap L_{1} \neq \emptyset$ and $|D_{1} \cap (L_{1} \cup L_{2})| \geq 2$. Suppose that the value of $m$ does not exist, it follows that $1 + |D_{1} \cap R_{j}| < \frac{3j + 10}{5}$, otherwise $R_{j}$ is sufficient with respect to $D_{1} \cup \{x\}$. Hence, we have that $|D_{1} \cap L_{1}| = 1$ and $|D_{1} \cap (L_{1} \cup L_{2})| < 2.2$. In fact $|D_{1} \cap L_{2}| = 1$. From the fact that $|D_{1} \cap R_{3}| < 2.8$, we have that $D_{1} \cap L_{3} = \emptyset$. If $D_{1} \cap L_{4} \neq \emptyset$, then we can conclude that $|D_{1} \cap (L_{4} \cup L_{5})| \geq 2$ from the fact that $D_{1}$ is a total dominating set of $G - x_{1}$, and then $R_{5}$ is sufficient with respect to $D_{1} \cup \{x\}$, a contradiction. So we may assume that $D_{1} \cap L_{4} = \emptyset$. Let $D_{0}$ be a minimum total dominating set of $G - x_{4}$. If $|D_{0} \cap R_{3}| \geq 3$, then $R_{3}$ is sufficient with respect to $D_{0} \cup \{x_{3}\}$, a contradiction. Thus, we have $|D_{0} \cap R_{3}| = 2$ and $D_{0} \cap L_{3} = \emptyset$. 
If $D_{0} \cap L_{4} = \emptyset$, then $D_{0} \cap R_{3}$ totally dominates $R_{3}$, and then $(D_{0} \cap R_{3}) \cup (D_{1} \setminus R_{3})$ is a smaller total dominating set of $G$, a contradiction. Hence we have that $D_{0} \cap L_{4} \neq \emptyset$ and $|D_{0} \cap (L_{4} \cup L_{5})| \geq 2$. Therefore, we have that $|D_{0} \cap R_{5}| \geq 4$ and the set $R_{5}$ is sufficient with respect to $D_{0} \cup \{x_{3}\}$, which leads to a contradiction.

Now, we know that the value of $m$ must exist. If $m = n$, then $n = m \leq \frac{5k-10}{3} \leq \frac{5k-7}{3}$, we are done. So we may assume that $m < n$.

If $m = 5t + 2$, then $|D \cap R_{m}| \geq 3t + 3.2$ and $|D \cap R_{m+1}| < 3t + 3.8$, which is a contradiction. If $m = 5t + 4$, then $|D \cap R_{m}| \geq 3t + 4.4$ and $|D \cap R_{m+1}| < 3t + 5$, which is also a contradiction. So we have that $m = 5t, 5t+1$ or $5t+3$. We further assume that $m + 2 \leq n$. 

If $m = 5t$, then $|D \cap R_{m}| \geq 3t + 2$ and $|D \cap R_{m+1}| < 3t + 2.6$, which implies that $|D \cap R_{m}| = 3t+2$ and $D \cap L_{m+1} = \emptyset$. From the fact that $|D \cap R_{m+2}| < 3t + 3.2$ and $D$ is a total dominating set and $|D \cap R_{m+3}| < 3t + 3.8$ (if $L_{m+3}$ exists), we can conclude that $D \cap L_{m+2} = \emptyset$ and $|D \cap L_{m+3}| = 1$. Consequently, the set $L_{m+4}$ exists and $D \cap L_{m+3}$ dominates $L_{m+2}$. But $|D \cap R_{m+4}| < 3t + 4.4$, so we have that $|D \cap L_{m+4}| = 1$.

If $m = 5t + 1$, then $|D \cap R_{m}| \geq 3t + 2.6$, $|D \cap R_{m+1}| < 3t + 3.2$ and $|D \cap R_{m+2}| < 3t + 3.8$, which implies that $|D \cap R_{m}| = 3t+3$ and $D \cap L_{m+1} = D \cap L_{m+2} = \emptyset$. In order to dominate $L_{m+2}$, the set $L_{m+3}$ exists and $D \cap L_{m+3}$ dominates $L_{m+2}$. But $|D \cap R_{m+3}| < 3t + 4.4$, so we have that $|D \cap L_{m+3}| = 1$. The set $D$ totally dominates $G$, it follows that $L_{m+4}$ exists and $D \cap L_{m+4} \neq \emptyset$. Hence $|D \cap R_{m+4}| \geq 3t + 5$ and $R_{m+4}$ is sufficient with respect to $D$, a contradiction to the maximality of $m$.

If $m = 5t + 3$, then $|D \cap R_{m}| \geq 3t + 3.8$, $|D \cap R_{m+1}| < 3t + 4.4$ and $|D \cap R_{m+2}| < 3t + 5$, which implies that $|D \cap R_{m}| = 3t+4$ and $D \cap L_{m+1} = D \cap L_{m+2} = \emptyset$. In order to dominate $L_{m+2}$, the set $L_{m+3}$ exists and $D \cap L_{m+3}$ dominates $L_{m+2}$. But $|D \cap R_{m+3}| < 3t + 5.6$, so we have that $|D \cap R_{m+3}| = 1$. Since $D$ is a total dominating set in $G$, it follows that $L_{m+4}$ exists and $D \cap L_{m+4} \neq \emptyset$, but with $|D \cap R_{m+4}| < 3t + 6.2$, we have that $|D \cap L_{m+4}| = 1$.

By the above arguments, we may assume that $D \cap L_{m+1} = D \cap L_{m+2} = \emptyset$ and $|D \cap L_{m+3}| = |D \cap L_{m+4}| = 1$, where $m = 5t$ or $m = 5t+3$. Without loss of generality, we assume that $D \cap L_{m+3} = \{x_{m+3}\}$ and $D \cap L_{m+4} = \{x_{m+4}\}$. Let $D_{3}$ and $D_{4}$ be a minimum total dominating set of $G - x_{m+3}$ and $G - x_{m+4}$, respectively.

Recall that the vertex $x_{m+3}$ dominates $L_{m+2}$, then $D_{3} \cap L_{m+2} = \emptyset$ and $D_{3} \cap R_{m+1}$ totally dominates $R_{m+1}$. If $|D_{3} \cap R_{m+1}| < |D \cap R_{m+1}|$, then $(D_{3} \cap R_{m+1}) \cup (D \setminus R_{m+1})$ is a smaller total dominating set in $G$, which leads to a contradiction. If $|D_{3} \cap R_{m+1}| > |D \cap R_{m+1}|$, then $R_{m+1}$ is sufficient with respect to the minimum total dominating set $D_{3} \cup \{x_{m+4}\}$. Hence we have that $|D_{3} \cap R_{m+1}| = |D \cap R_{m+1}|$. Notice that maybe $L_{m+5}$ does not exist, if this happens, then we view $L_{m+5}$ as an empty set. If $|D_{3} \cap (L_{m+3} \cup L_{m+4} \cup L_{m+5})| \geq 2$, then $|(D_{3} \cup \{x_{m+4}\}) \cap R_{m+5}| \geq |D \cap R_{m+1}| + 3$, and then $R_{m+5}$ (or $R_{m+4}$ if $L_{m+5}$ does not exist) is sufficient with respect to $D_{3} \cup \{x_{m+4}\}$, which contradicts the maximality of $m$. Hence, we have that $|D_{3} \cap (L_{m+3} \cup L_{m+4} \cup L_{m+5})| \leq 1$, which implies that $L_{m+5}$ exists and $D_{3} \cap L_{m+4} = \emptyset$ and $L_{m+3} = \{x_{m+3}\}$.

Notice that $D_{4} \cap L_{m+3} = \emptyset$ and $D_{4} \cap R_{m+2}$ totally dominates $R_{m+2}$. If $|D_{4} \cap R_{m+2}| < |D \cap R_{m+2}|$, then $(D_{4} \cap R_{m+2}) \cup (D \setminus R_{m+2})$ is a smaller total dominating set of $G$, which leads to a contradiction. If $|D_{4} \cap R_{m+2}| > |D \cap R_{m+2}|$, then $|(D_{4} \cup \{x_{m+3}\}) \cap R_{m+3}| \geq |D \cap R_{m}| + 2$, and then $R_{m+3}$ is sufficient with respective to $D_{4} \cup \{x_{m+3}\}$, which leads to a contradiction. Hence, we have that $|D_{4} \cap R_{m+2}| = |D \cap R_{m+2}|$. 

If $D_{4} \cap L_{m+2} \neq \emptyset$, then  $(D_{4} \cap R_{m+2}) \cup (D_{3} \setminus R_{m+2})$ is a smaller total dominating set of $G$, a contradiction. It follows that $D_{4} \cap L_{m+2} = \emptyset$. In order to dominate the vertex $x_{m+3}$, we must have that $D_{4} \cap L_{m+4} \neq \emptyset$. Hence, we can conclude that $|D_{4} \cap (L_{m+4} \cup L_{m+5})| \geq 2$ and $R_{m+5}$ is sufficient with respect to $D_{4} \cup \{x_{m+3}\}$, a contradiction.

Finally, we have to deal with the case that $m = n - 1$. Recall that $m$ is the maximum integer $j$ such that $|D \cap R_{j}| \geq \frac{3j + 10}{5}$, it follows that $D \cap L_{m+1} = D \cap L_{n} = \emptyset$, and then $|D \cap R_{m}| = k$ and $n = m + 1 \leq \frac{5k-10}{3} + 1 = \frac{5k-7}{3}$.
\end{proof}

The {\em coalescence} of two graphs $G_{1}$ and $G_{2}$ with respect to a vertex $x$ in $G_{1}$ and a vertex $y$ in $G_{2}$, is the graph $G_{1}(x*y)G_{2}$ obtained by identifying $x$ and $y$; in other words, replacing the vertices $x$ and $y$ by a new vertex $w$ adjacent to the same vertices in $G_{1}$ as $x$ and the same vertices in $G_{2}$ as $y$. If there is no confusion, then we write $G_{1} * G_{2}$ instead of $G_{1}(x*y)G_{2}$.

\begin{theorem}\label{IDIAM}%
If $G$ is a connected $k$-$i$-vertex-critical graph, then $\diam(G) \leq 2(k-1)$.
\end{theorem}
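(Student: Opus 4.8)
The plan is to run the distance-layer argument of Theorem~\ref{TDIAM} and of~\cite{MR1321108}, tuned to independent domination. We may assume $n := \diam(G) \geq 3$, the case $n \leq 2$ being immediate (note $k = 1$ forces $G = K_{1}$). Fix $x$, $x_{n}$ with $\dist(x, x_{n}) = n$, a shortest path $x x_{1} \dots x_{n}$, layers $L_{i} = \{v : \dist(x, v) = i\}$ and prefixes $R_{i} = L_{0} \cup \dots \cup L_{i}$. For a minimum independent dominating set $D$ (so $|D| = k$) call $R_{j}$ \emph{sufficient with respect to $D$} if $2\,|D \cap R_{j}| \geq j + 3$; the constant $3$ is tuned so that the extremal configuration below yields exactly $n \leq 2(k-1)$. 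Let $m$ be the largest sufficient index and fix $D$ making $m$ as large as possible. Throughout I would use the following sharpening of the lemma above for $k$-$i$-vertex-critical graphs: for any vertex $v$, a minimum independent dominating set $D_{v}$ of $G - v$ satisfies $N_{G}[v] \cap D_{v} = \emptyset$ (else $D_{v}$ is already an independent dominating set of $G$ of size $k-1$), so $D_{v} \cup \{v\}$ is a minimum independent dominating set of $G$.

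First I would show $m$ exists and dispose of the extremal cases. Applied to $v = x_{1}$: the set $D_{1} \cup \{x_{1}\}$ is a minimum independent dominating set, $x \notin D_{1}$, and $N_{G}(x) \setminus \{x_{1}\}$ meets $D_{1}$ because $D_{1}$ dominates $x$ in $G - x_{1}$; hence $|(D_{1} \cup \{x_{1}\}) \cap R_{1}| \geq 2$, so $R_{1}$ is sufficient and $m \geq 1$. If $m = n$ then $2k = 2\,|D \cap R_{n}| \geq n + 3$ gives $n \leq 2k - 3$. If $m = n-1$ then maximality of $m$ forces $D \cap L_{n} = \emptyset$ (otherwise adjoining a vertex of $L_{n}$ makes $R_{n}$ sufficient), so $|D \cap R_{n-1}| = k$ and $2k \geq (n-1)+3$ gives $n \leq 2(k-1)$.

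The substance is the case $m \leq n-2$. A short chain of counting steps — using that $R_{m+1}, R_{m+2}, R_{m+3}, R_{m+4}$ are not sufficient together with the need to dominate $L_{m+1}$ and $L_{m+2}$ — should force the rigid local picture $D \cap L_{m+1} = D \cap L_{m+2} = \emptyset$, $|D \cap R_{m}| = \frac{m+3}{2}$ (so $m$ is odd), $D \cap L_{m+3} = \{z\}$ with $L_{m+2} \subseteq N_{G}(z)$, and $D \cap L_{m+4} = \emptyset$ where $L_{m+4}$ exists; since $x_{n}$ could then not be dominated if $m = n-2$, in fact $m \leq n-3$. Next, criticality at $z$ yields a minimum independent dominating set $D_{z}$ of $G - z$ with $D_{z} \cap (L_{m+2} \cup L_{m+3}) = \emptyset$, which must dominate $L_{m+2}$ from $L_{m+1}$, so $D_{z} \cap L_{m+1} \neq \emptyset$. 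Since $D_{z} \cap R_{m+1}$ and $D \setminus R_{m+1}$ lie in layers $\leq m+1$ and $\geq m+3$ respectively, no edge joins the two parts of $(D_{z} \cap R_{m+1}) \cup (D \setminus R_{m+1})$, so this set is again a minimum independent dominating set; comparing its size with $k$ and invoking the maximality of $m$ should pin $|D_{z} \cap R_{m+1}| = \frac{m+3}{2}$ exactly, and pushing the non-sufficiency of $R_{m+1}, \dots, R_{m+4}$ for $D_{z} \cup \{z\}$ through the same bookkeeping should force $L_{m+3} = \{z\}$ — so $z$ is a cut vertex separating $R_{m+2}$ from $L_{\geq m+4}$. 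Running the same analysis past $L_{m+4}$ would exhibit a chain of single-vertex separators between blocks of $D$-empty layers; I would finish by showing this is possible only when $m = n-3$ (whereupon $k = \frac{m+3}{2}+1$, so $n = m+3 = 2(k-1)$), any longer configuration re-splicing into either a minimum independent dominating set for which some $R_{j}$ with $j > m$ is sufficient or an independent dominating set of size smaller than $k$ — a contradiction in either case.

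The hard part will be this last step. The counting does not self-terminate: the ``empty, empty, singleton, empty'' block of layers around $L_{m+3}$ can recur arbitrarily far out, and one must bound the number of ``long'' such blocks, since more than one of them would push $n$ past $2(k-1)$. Controlling this requires the extra leverage of the cut-vertex property of $z$ and $L_{m+2} \subseteq N_{G}(z)$, an exact comparison among $i(G[R_{m+1}])$, $i(G[R_{m+2}])$ and $\frac{m+3}{2}$, and repeated splicings of modified independent dominating sets in each of which one must check that no edge joins the glued pieces; this independence bookkeeping — with no counterpart in the total-domination argument — is exactly what makes the case delicate and is the point at which the proof in~\cite{MR2917914} breaks down.
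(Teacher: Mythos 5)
Your setup (layer decomposition, a ``sufficiency'' threshold, maximizing the sufficient index $m$ over all minimum independent dominating sets, and the sharpened criticality lemma $D_{v}\cap N_{G}[v]=\emptyset$) is the same machinery the paper uses, and your bookkeeping down to the local picture ``$D\cap L_{m+1}=D\cap L_{m+2}=\emptyset$, $D\cap L_{m+3}=\{z\}$ dominating $L_{m+2}$, $D\cap L_{m+4}=\emptyset$'' matches the paper's (your threshold $\frac{j+3}{2}$ versus the paper's $\frac{j+2}{2}$ only flips the parity of $m$ and, pleasantly, lets you close the boundary case $m=n-1$ by direct counting, where the paper instead resorts to a doubling trick with $G(x*x)G$). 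But the proposal has a genuine gap, and you name it yourself: you never finish the main case. ``I would finish by showing this is possible only when $m=n-3$\dots'' followed by ``the hard part will be this last step; the counting does not self-terminate'' is a description of an obstacle, not an argument. Everything before that point is compatible with $n$ being much larger than $2(k-1)$, so the theorem is not proved.

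The paper's way past this obstacle is worth comparing against what you sketched. It does not try to bound the number of recurring ``empty, empty, singleton, empty'' blocks at all; instead it shows the configuration with $m+2\leq n$ is outright impossible, so no recurrence ever starts. Concretely: after using $D_{3}=D_{z}$ (your $D_{z}$) to force $L_{m+3}=\{w\}$ and $D_{3}\cap(L_{m+3}\cup L_{m+4})=\emptyset$, it applies criticality a \emph{second} time, to the vertex $x_{m+4}\in L_{m+4}$ (which is adjacent to $w$ since $L_{m+3}=\{w\}$, so $w\notin D_{4}$ and $D_{4}\cap R_{m+2}$ must dominate $R_{m+2}$). A four-way case split on whether $|D_{4}\cap R_{m+2}|$ is larger than, equal to, or smaller than $|D\cap R_{m+2}|$, crossed with whether $D_{4}$ meets $L_{m+2}$, yields in every branch either a sufficient $R_{j}$ with $j>m$ (contradicting maximality of $m$) or a spliced independent dominating set of size less than $k$ (the splices $(D_{4}\cap R_{m+2})\cup(D_{3}\setminus R_{m+3})$ and $(D_{4}\cap R_{m+2})\cup(D\setminus R_{m+2})$, whose independence is exactly the layer-distance check you anticipate). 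That second deletion at $x_{m+4}$, together with having both $D$ and $D_{3}$ available to splice against $D_{4}$, is the idea your proposal is missing; without it the argument indeed does not terminate, and with it there is nothing left to iterate. If you add that step (adjusted to your threshold), your variant would go through and would even be marginally cleaner at the $m=n-1$ boundary.
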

\begin{proof}%
Let $x$ and $x_{n}$ be vertices such that $\dist(x, x_{n}) = \diam(G) = n$. Let $xx_{1}\dots x_{n-1}x_{n}$ be a shortest path between $x$ and $x_{n}$. Define $L_{0}, L_{1}, \dots, L_{n}$ by $L_{i} = \{v \in V(G) \mid \dist_{G}(x, v) = i\}$ for $0 \leq i \leq n$. In particular, $L_{0} = \{x\}$ and $L_{1} = N_{G}(x)$. Let $R_{i} = L_{0} \cup L_{1} \cup \dots \cup L_{i}$ for $0 \leq i \leq n$. Let $D$ be a minimum independent dominating set in $G$. If $|D \cap R_{j}| \geq \frac{j + 2}{2}$, then we say that $R_{j}$ is {\em sufficient with respect to $D$}.

Let $m$ be the maximum integer $j$ such that $|D \cap R_{j}| \geq \frac{j + 2}{2}$. The value of $m$ depends on the minimum independent dominating set $D$, we may assume that $D$ is chosen such that $m$ is maximum among all the minimum independent dominating set. Let $D_{1}$ be a minimum independent dominating set of $G - x_{1}$. It is obvious that $x \notin D_{1}$ and $D_{1} \cap L_{1} \neq \emptyset$. Thus $D_{1} \cup \{x_{1}\}$ is a minimum independent dominating set of $G$ with $|(D_{1} \cup \{x_{1}\}) \cap R_{1}| \geq 2$, and then the value of $m$ exists and $m \geq 1$. If $m = n$, then $n = m \leq 2(k-1)$, we are done. So we may assume that $m < n$.

If $m = 2t + 1$, then $|D \cap R_{m}| \geq t + 1.5$ and $|D \cap R_{m+1}| < t + 2$, which is a contradiction. So we have that $m = 2t$. We further assume that $m + 2 \leq n$. It follows that $|D \cap R_{m}| \geq t + 1$ and $|D \cap R_{m+1}| < t + 1.5$ and $|D \cap R_{m+2}| < t + 2$, and then $|D \cap R_{m}| = t + 1$ and $D \cap L_{m+1} = D \cap L_{m+2} = \emptyset$. In order to dominate $L_{m+2}$, the set $L_{m+3}$ must exist and $D \cap L_{m+3}$ dominates $L_{m+2}$. The fact that $|D \cap R_{m+3}| < t + 2.5$ implies that $|D \cap L_{m+3}| = 1$. Let $D \cap L_{m+3} = \{w\}$. Notice that if $L_{m+4}$ exists, we can conclude that $D \cap L_{m+4} = \emptyset$ from the fact that $|D \cap R_{m+4}| < t + 3$. Hence, the vertex $w$ dominates $L_{m+2} \cup L_{m+3}$.

Let $D_{3}$ be a minimum independent dominating set of $G - w$. Notice that $D_{3} \cap (L_{m+2} \cup L_{m+3}) = \emptyset$. If $|D_{3} \cap R_{m+1}| > |D \cap R_{m+1}|$, then $R_{m+1}$ is sufficient with respect to $D_{3} \cup \{w\}$. If $|D_{3} \cap R_{m+1}| < |D \cap R_{m+1}|$, then $(D_{3} \cap R_{m+1}) \cup (D \setminus R_{m+1})$ is a smaller independent dominating set of $G$, which is a contradiction. Hence, we have that $|D_{3} \cap R_{m+1}| = |D \cap R_{m+1}|$.

Suppose that the set $L_{m+4}$ does not exist. It implies that $|D \cap R_{m}| = k - 1 = t+1$. Recall that $w$ dominates $L_{m+2} \cup L_{m+3}$, it follows that $D_{3} \subseteq R_{m+1}$ and $L_{m+3} = \{w\}$. Let $D_{2}$ be a minimum independent dominating set of $G - x_{m+2}$. Therefore, the set $D_{2} \cup \{x_{m+2}\}$ is a minimum independent dominating set with $|(D_{2} \cup \{x_{m+2}\}) \cap R_{m+2}| =  k = t+2$, thus $R_{m+2}$ is sufficient with respect to $D_{2} \cup \{x_{m+2}\}$, which is a contradiction. So we may assume that $L_{m+4}$ exists.

If $|D_{3} \cap (L_{m+3} \cup L_{m+4})| \geq 1$, then $R_{m+4}$ is sufficient with respect to $D_{3} \cup \{w\}$, which leads to a contradiction. So we have that $D_{3} \cap (L_{m+3} \cup L_{m+4}) =  \emptyset$ and $L_{m+3} = \{w\}$. Let $D_{4}$ be a minimum independent dominating set of $G - x_{m+4}$. Notice that $D_{4} \cap L_{m+3} = \emptyset$ and $D_{4} \cap R_{m+2}$ totally dominates $R_{m+2}$. If $|D_{4} \cap R_{m+2}| > |D \cap R_{m+2}|$, then $R_{m+2}$ is sufficient with respect to $D_{4} \cup \{x_{m+4}\}$. 

If $|D_{4} \cap R_{m+2}| \leq |D \cap R_{m+2}|$ and $D_{4} \cap L_{m+2} \neq \emptyset$, then $(D_{4} \cap R_{m+2}) \cup (D_{3} \setminus R_{m+3})$ is a smaller independent dominating set of $G$, a contradiction. 

If $|D_{4} \cap R_{m+2}| = |D \cap R_{m+2}|$ and $D_{4} \cap L_{m+2} = \emptyset$, then $D_{4} \cap L_{m+4} \neq \emptyset$ in order to dominates $w$, and then $R_{m+4}$ is sufficient with respect to $D_{4} \cup \{x_{m+4}\}$.

If $|D_{4} \cap R_{m+2}| < |D \cap R_{m+2}|$ and $D_{4} \cap L_{m+2} = \emptyset$, then $(D_{4} \cap R_{m+2}) \cup (D \setminus R_{m+2})$ is a smaller independent dominating set of $G$, a contradiction.

By the above arguments, the theorem is true except the case that $m = 2t = n - 1$. Notice that $G (x*x) G$ is a $(2k-1)$-$i$-vertex-critical graph with diameter $2n$. The theorem is true for the graph $G (x*x) G$, it implies that $2n \leq 2(2k-1 - 1)$, thus $n \leq 2(k-1)$.
\end{proof}

\begin{theorem}%
If $G$ is a $k$-$\gammaup_{c}$-vertex-critical graph, then $\diam(G) \leq k$.
\end{theorem}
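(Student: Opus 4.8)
The plan is to obtain the bound quickly from criticality together with the elementary inequality $\diam(H)\le\gammaup_{c}(H)+1$, which holds for every connected graph $H$. First I would record that inequality: if $C$ is a minimum connected dominating set of a connected graph $H$, then $H[C]$ is connected on $|C|$ vertices and hence has diameter at most $|C|-1$; since $\dist_{H}(c,c')\le\dist_{H[C]}(c,c')$ for all $c,c'\in C$, and since every vertex of $H$ lies within distance $1$ of $C$, any two vertices $u,v$ of $H$ satisfy $\dist_{H}(u,v)\le 1+(|C|-1)+1=\gammaup_{c}(H)+1$.

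For the theorem itself, I would suppose for contradiction that $\diam(G)\ge k+1$. Then $\diam(G)\ge 2$, so there are vertices $x,y$ with $\dist_{G}(x,y)=\diam(G)$, and a shortest $x$--$y$ path has an internal vertex $w$; in particular $w\notin\{x,y\}$. Since being $k$-$\gammaup_{c}$-vertex-critical forces $G$ to be $2$-connected, $G-w$ is connected, and since $G$ is connected domination vertex critical, $\gammaup_{c}(G-w)<\gammaup_{c}(G)=k$, that is, $\gammaup_{c}(G-w)\le k-1$. Applying the inequality above to $H=G-w$ yields $\diam(G-w)\le(k-1)+1=k$. On the other hand, every $x$--$y$ path in $G-w$ is also an $x$--$y$ path in $G$, so $\dist_{G-w}(x,y)\ge\dist_{G}(x,y)\ge k+1$, and therefore $\diam(G-w)\ge k+1$; this contradicts $\diam(G-w)\le k$. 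Hence $\diam(G)\le k$.

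There is no real obstacle in this argument; the only points needing a little care are that an internal vertex $w$ of a diametral path genuinely exists and is distinct from both endpoints (which is precisely why one first reduces to the case $\diam(G)\ge 2$), and that deleting $w$ cannot decrease $\dist(x,y)$ while, by criticality, it must decrease $\gammaup_{c}$. Should one wish to avoid invoking $\diam\le\gammaup_{c}+1$, the same conclusion follows from a layering argument in the style of Theorems~\ref{TDIAM} and~\ref{IDIAM}: with layers $L_{i}$ centred at a vertex $x$ realizing the diameter $n$, the connectedness of a minimum connected dominating set $S$ forces $G[S]$ to contain a path meeting both $L_{0}\cup L_{1}$ and $L_{n-1}\cup L_{n}$, whence $k=|S|\ge n-1$; in the leftover case $n=k+1$ one deletes a neighbour of $x$ lying in $S$ and checks that the replacement set would have to reach from $L_{1}$ out to $L_{n-1}\cup L_{n}$, which no set of size $k-1$ can do.
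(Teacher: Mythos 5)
Your proof is correct, and it reaches the bound by a slightly different, more modular route than the paper. The paper deletes the second vertex $x_{1}$ of a diametral path, takes a minimum connected dominating set $D_{1}$ of $G-x_{1}$ (so $|D_{1}|=k-1$ and $x\notin D_{1}$), and argues that the connectivity of the subgraph induced by $D_{1}$, together with the need to dominate both $x$ and $x_{n}$, forces $D_{1}$ to meet every layer $L_{1},\dots,L_{n-1}$, whence $k-1\geq n-1$ directly. You instead isolate the general inequality $\diam(H)\leq\gammaup_{c}(H)+1$ as a standalone lemma --- which is exactly that layer-spanning fact in disguise --- and apply it to $G-w$ for an internal vertex $w$ of a diametral path, using the $2$-connectivity of $G$ to keep $G-w$ connected and criticality to drop $\gammaup_{c}$ by one, then contradict $\dist_{G-w}(x,y)\geq\dist_{G}(x,y)\geq k+1$. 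Both arguments spend criticality in the same way (delete one vertex to save one unit of connected domination number); yours buys a reusable lemma and pushes all the layer bookkeeping into it, while the paper's is a one-shot direct count with no contradiction needed. The points requiring care in your version --- the existence of an internal vertex once $\diam(G)\geq 2$, the monotonicity of distance under vertex deletion, and the fact that $\gammaup_{c}(G-w)$ is well defined --- are all handled correctly.
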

\begin{proof}%
Let $x$ and $x_{n}$ be vertices such that $\dist(x, x_{n}) = \diam(G) = n$. Let $xx_{1}\dots x_{n-1}x_{n}$ be a shortest path between $x$ and $x_{n}$. Define $L_{0}, L_{1}, \dots, L_{n}$ by $L_{i} = \{v \in V(G) \mid \dist_{G}(x, v) = i\}$ for $0 \leq i \leq n$. In particular, $L_{0} = \{x\}$ and $L_{1} = N_{G}(x)$. Let $D_{1}$ be a minimum connected dominating set of $G - x_{1}$. It is obviously that $x \notin D_{1}$ and $D_{1} \cap L_{1} \neq \emptyset$. Since $D_{1}$ is a connected dominating set of $G$, it follows that $D_{1} \cap L_{i} \neq \emptyset$ for every $1 \leq i \leq n-1$. Hence we have that $|D_{1}| = k - 1 \geq n - 1$, which implies that $\diam(G) = n \leq k$.
\end{proof}

\section{Sharpness of the upper bounds}

We characterize when the coalescence of two total domination vertex critical graphs is still a total domination vertex graph.

\begin{theorem}\label{coalescence}%
Let $G_{1}$ and $G_{2}$ be $k_{1}$-$\gammaup_{t}$-vertex-critical and $k_{2}$-$\gammaup_{t}$-vertex-critical graphs without leaf vertices, respectively. Let $x$ and $y$ be two vertices in $G_{1}$ and $G_{2}$, respectively. Then $G_{1}(x*y)G_{2}$ is $(k_{1} + k_{2} -1)$-$\gammaup_{t}$-vertex-critical if and only if $\gammaup_{t}(G_{2} - N_{G_{2}}[y]) \geq k_{2} -1$ and $\gammaup_{t}(G_{1} - N_{G_{1}}[x]) \geq k_{1} -1$.
\end{theorem}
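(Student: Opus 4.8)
Write $G = G_{1}(x*y)G_{2}$ and let $w$ be the vertex obtained by identifying $x$ and $y$, so $V(G_{1}) \cap V(G_{2}) = \{w\}$. Since $\delta(G_{i}) \geq 2$, the coalescence $G$ has minimum degree at least $2$ and hence no leaf vertices, so ``$(k_{1}+k_{2}-1)$-$\gammaup_{t}$-vertex-critical'' unpacks to: $\gammaup_{t}(G) = k_{1}+k_{2}-1$ and $\gammaup_{t}(G-v) < \gammaup_{t}(G)$ for \emph{every} $v \in V(G)$. As the ``only if'' direction will follow by contraposition once $\gammaup_{t}(G)$ is pinned down, the bulk of the work is the converse. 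The structural fact I would use throughout is that any total dominating set $D$ of $G$ restricts, on each side, to a set $D \cap V(G_{i})$ that totally dominates $V(G_{i}) \setminus \{w\}$, because vertices of $V(G_{i}) \setminus \{w\}$ have the same neighborhood in $G$ as in $G_{i}$.

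For the upper bound $\gammaup_{t}(G) \leq k_{1}+k_{2}-1$ I would glue, via Lemma~\ref{Containing}, minimum total dominating sets $D_{1} \ni x$ of $G_{1}$ and $D_{2} \ni y$ of $G_{2}$: their union is a total dominating set of $G$ of size $k_{1}+k_{2}-1$. For the ``only if'' direction, suppose $\gammaup_{t}(G_{2} - N_{G_{2}}[y]) \leq k_{2}-2$; taking a total dominating set $D_{2}^{*}$ of $G_{2}-N_{G_{2}}[y]$ of that size together with a minimum total dominating set $D_{1} \ni x$ of $G_{1}$, the set $D_{1} \cup D_{2}^{*}$ totally dominates $G$ — here $w = x \in D_{1}$ dominates all of $N_{G_{2}}(y)$, while $D_{2}^{*}$ handles $V(G_{2}) \setminus N_{G_{2}}[y]$ — and has size at most $k_{1}+k_{2}-2$, so $\gammaup_{t}(G) \neq k_{1}+k_{2}-1$ and $G$ is not critical; the other hypothesis is symmetric.

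For the converse, assume both lower bounds and first prove $\gammaup_{t}(G) \geq k_{1}+k_{2}-1$. Let $D$ be a minimum total dominating set, put $D_{i} = D \cap V(G_{i})$, so $|D| = |D_{1}|+|D_{2}|-|D \cap \{w\}|$, and split into cases by how $w$ is totally dominated. If $w$ has a neighbor of $D$ inside $G_{i}$ then $D_{i}$ is a total dominating set of $G_{i}$ and $|D_{i}| \geq k_{i}$; if $w \notin D$ then $D_{i}$ is a total dominating set of $G_{i}-w$ and $|D_{i}| \geq k_{i}-1$ by Lemma~\ref{Containing}. These two observations settle every case except the one with $w \in D$ and $w$ dominated only from, say, the $G_{1}$-side, i.e.\ $N_{G_{2}}(w) \cap D = \emptyset$. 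In that case $D_{2} \setminus \{w\} \subseteq V(G_{2}) \setminus N_{G_{2}}[w]$, and I would check that it is a total dominating set of $G_{2}-N_{G_{2}}[y]$; the hypothesis then forces $|D_{2}| \geq k_{2}$, and with $|D_{1}| \geq k_{1}$ this gives $|D| \geq k_{1}+k_{2}-1$ again. I expect this last verification — that the portion of $D$ far from $w$ induces a total dominating set of $G_{i}-N_{G_{i}}[\,\cdot\,]$ — to be the crux, since it is exactly where the hypotheses are consumed and where care is needed about members of $D$ lying in $N_{G_{i}}(w)$.

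It remains to check criticality at each $v \in V(G)$. For $v = w$, $G-w = (G_{1}-x) \sqcup (G_{2}-y)$ is a disjoint union of graphs without isolated vertices, so $\gammaup_{t}(G-w) = \gammaup_{t}(G_{1}-x)+\gammaup_{t}(G_{2}-y) = (k_{1}-1)+(k_{2}-1) < \gammaup_{t}(G)$ by vertex-criticality of $G_{1}$ and $G_{2}$. For $v \in V(G_{1}) \setminus \{w\}$ (the case $v \in V(G_{2}) \setminus \{w\}$ being symmetric), I would take a minimum total dominating set $D_{1}'$ of $G_{1}-v$, of size $k_{1}-1$ since $G_{1}$ is critical and leaf-free, and a minimum total dominating set $D_{2}'$ of $G_{2}-y$, of size $k_{2}-1$ by Lemma~\ref{Containing}; then $D_{1}' \cap D_{2}' = \emptyset$ and $D_{1}' \cup D_{2}'$ is a total dominating set of $G-v$ of size $k_{1}+k_{2}-2 < \gammaup_{t}(G)$, where $w = x$ is dominated because $D_{1}'$ already dominates it inside $G_{1}-v$. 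One minor degenerate case — some $G_{i}-N_{G_{i}}[\,\cdot\,]$ being empty, which forces the corresponding $k_{i}$ to be $2$ — is dealt with directly and presents no difficulty.
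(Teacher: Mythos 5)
Your proposal is correct and follows essentially the same route as the paper: the same gluing constructions (via Lemma~\ref{Containing}) to bound $\gammaup_{t}(G-v)$ for each $v$, the same contrapositive argument for the ``only if'' direction, and the same case analysis on whether $w$ lies in $D$ and from which side it is totally dominated, with the hypothesis $\gammaup_{t}(G_{i}-N_{G_{i}}[\,\cdot\,])\geq k_{i}-1$ consumed exactly in the one-sided case. Your write-up is in fact a bit more careful than the paper's in making the upper bound $\gammaup_{t}(G)\leq k_{1}+k_{2}-1$ explicit, but this is a matter of exposition rather than a different method.
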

\begin{proof}%
Denote the graph $G_{1}(x*y) G_{2}$ by $G$ for short. Let $D$ be a minimum total dominating set of $G$ and $w$ be the new created vertex in $G$. Let $D_{1}$ and $D_{2}$ be a minimum total dominating set of $G_{1} - x$ and $G_{2} - y$, respectively. Thus $|D_{1}| = k_{1} - 1$ and $|D_{2}| = k_{2} - 1$. It is obvious that $\gammaup_{t}(G - w) = k_{1} + k_{2} - 2$. For any vertex $v \in V(G_{1}) \setminus \{x\}$, the union of $D_{2}$ and a minimum total dominating set of $G_{1} - v$ is a total dominating set of $G - v$, thus $\gammaup_{t}(G - v) \leq k_{1} + k_{2} - 2$. Similarly, for any vertex $v \in V(G_{2}) \setminus \{y\}$, the union of $D_{1}$ and a minimum total dominating set of $G_{2} - v$ is a total dominating set of $G - v$, and then $\gammaup_{t}(G - v) \leq k_{1} + k_{2} - 2$. Hence, for any vertex $v$ in $V(G)$, we have that $\gammaup_{t}(G - v) \leq k_{1} + k_{2} - 2$.

($\Longleftarrow$) Suppose that $\gammaup_{t}(G_{2} - N_{G_{2}}[y]) \geq k_{2} -1$ and $\gammaup_{t}(G_{1} - N_{G_{1}}[x]) \geq k_{1} -1$. We want to prove $\gammaup_{t}(G) \geq k_{1} + k_{2} - 1$.

Notice that either $D \cap V(G_{1})$ totally dominates $G_{1}$ or $D \cap V(G_{2})$ totally dominates $G_{2}$. By symmetry, we may assume that $D \cap V(G_{1})$ totally dominates $G_{1}$ and $|D \cap V(G_{1})| \geq k_{1}$. If $w \notin D$, then $D \cap V(G_{2})$ totally dominates $G_{2} - y$ and $|D \cap V(G_{2})| \geq k_{2} -1$, and then $|D| \geq k_{1} + k_{2} - 1$. So we may assume that $w \in D$. If $D \cap N_{G_{2}}(y) \neq \emptyset$, then $D \cap V(G_{2})$ is a total dominating set of $G_{2}$ and $|D \cap V(G_{2})| \geq k_{2}$, and then $|D| \geq k_{1} + k_{2} - 1$. If $D \cap N_{G_{2}}(y) = \emptyset$, then $D \setminus V(G_{1}) \subseteq V(G_{2}) \setminus N_{G_{2}}[y]$ and $D \setminus V(G_{1})$ totally dominates $G_{2} - N_{G_{2}}[y]$, and then $|D \setminus V(G_{1})| \geq k_{2} - 1$ and $|D| \geq k_{1} + k_{2} - 1$.

($\Longrightarrow$) Suppose that $|D| = k_{1} + k_{2} -1$. We want to prove that $\gammaup_{t}(G_{2} - N_{G_{2}}[y]) \geq k_{2} -1$ and $\gammaup_{t}(G_{1} - N_{G_{1}}[x]) \geq k_{1} -1$.

By \autoref{Containing}, let $D_{1}^{*}$ be a minimum total dominating set of $G_{1}$ containing $x$. It follows that $\gammaup_{t}(G_{2} - N_{G_{2}}[y]) \geq k_{2} - 1$; otherwise, the union of $D_{1}^{*}$ and a minimum total dominating set of $G_{2} - N_{G_{2}}[y]$ is a smaller total dominating set of $G$, a contradiction. Similarly, we can prove that $\gammaup_{t}(G_{1} - N_{G_{1}}[x]) \geq k_{1} -1$.
\end{proof}
\begin{remark}%
From the characterization, the graph $C_{6} * C_{6}$ is not a total domination vertex critical graph as mentioned in \cite{MR2917914}.
\end{remark}

A {\em pointed graph} is a graph with two assigned diametrical vertices called \textsc{Left} and \textsc{Right}. For a pointed graph $G$, we define $L_{k}(G)$ and $R_{k}(G)$ be the set of vertices which are distance $k$ from the \textsc{Left}-vertex and \textsc{Right}-vertex, respectively.

For two pointed graphs $G_{1}$ and $G_{2}$, we define $G_{1} \bullet G_{2}$ as the pointed graph obtained by identifying and unassigning the \textsc{Right}-vertex from $G_{1}$ and the \textsc{Left}-vertex from $G_{2}$.

Let $K_{m, m}$ be a complete bipartite graph with bipartition $\{y_{1}, y_{3}, \dots, y_{2m-1}\}$ and $\{y_{2}, y_{4}, \dots, y_{2m}\}$, where $m \geq 2$. Let $F$ be the graph obtained from $K_{m, m}$ by removing one edge $y_{1}y_{2m}$, and let $\bar{F}$ be the complement of $F$ with $x_{i}$ corresponding to $y_{i}$. Notice that $\gammaup_{t}(F) = \gammaup_{t}(\bar{F}) = 2$ and $\{x_{1}, x_{2m}\}$ totally dominates $\bar{F}$ and every pair of adjacent vertices in $K_{m, m}$ totally dominates $K_{m, m}$. Let $R$ be the pointed graph obtained from the disjoint union of $\bar{F}$ and $K_{m, m}$, by joining every vertex of $\bar{F}$ to every vertex of $K_{m, m}$ except edges between the corresponding vertices, and adding five new vertices $z_{1}, z_{2}, z_{3}$, \textsc{Left} and \textsc{Right} such that \textsc{Left} is adjacent to every vertex in $\bar{F}$, the vertex $z_{1}$ is adjacent to $\{x_{1}, x_{2}, \dots, x_{2m-1}\} \cup \{y_{2}, y_{3}, \dots, y_{2m-1}\}$, the vertex $z_{2}$ is adjacent to $\{x_{2}, x_{3}, \dots, x_{2m}\} \cup \{y_{2}, y_{3}, \dots, y_{2m-1}\}$, the vertex $z_{3}$ is adjacent to every vertex in $K_{m, m}$ and $z_{1}$, while \textsc{Right} is adjacent to every vertex in $K_{m, m}$ and $z_{2}$.

\begin{theorem}%
The graph $R$ is $3$-$\gammaup_{t}$-vertex-critical graph with diameter three.
\end{theorem}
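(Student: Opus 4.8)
The plan is to check, in turn, that $\gammaup_{t}(R)=3$, that $\diam(R)=3$, and that $R$ is vertex-critical. Throughout one uses only the adjacency rules of the construction: $x_{i}\sim y_{j}$ exactly when $i\neq j$; the listed neighbourhoods of $z_{1},z_{2},z_{3},\textsc{Left},\textsc{Right}$; and the fact that $x_{i}x_{j}$ is an edge of $\bar{F}$ exactly when $i,j$ have the same parity or $\{i,j\}=\{1,2m\}$.

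For $\gammaup_{t}(R)\leq 3$ I would take $S=\{x_{1},x_{2m},y_{2}\}$. Since $x_{1}x_{2m}$ is the special edge of $\bar{F}$ and $x_{1},x_{2m}$ are both adjacent to $y_{2}$ (using $m\geq 2$), $S$ induces a triangle, and it totally dominates $R$: $x_{1}$ covers $\textsc{Left}$, $z_{1}$ and the odd-indexed $x_{p}$; $x_{2m}$ covers $z_{2}$, $y_{1}$ and the even-indexed $x_{p}$; $y_{2}$ covers $\textsc{Right}$, $z_{3}$ and every $y_{p}$ with $p\neq 1$. For $\gammaup_{t}(R)\geq 3$, suppose $\{a,b\}$ is a total dominating set, so $a\sim b$. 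As $N_{R}(\textsc{Left})$ consists only of the $x_{i}$, we may put $a=x_{i}$; as $x_{i}$ dominates neither $\textsc{Right}$ nor $z_{3}$, and $N_{R}(\textsc{Right})\subseteq\{y_{1},\dots,y_{2m},z_{2}\}$ while $N_{R}(z_{3})\subseteq\{y_{1},\dots,y_{2m},z_{1}\}$, we get $b=y_{j}$ with $j\neq i$. Now dominating the (still present) vertex $y_{i}$ forces $i$ and $j$ to have opposite parity, while dominating $x_{j}$ forces them to have the same parity or $\{i,j\}=\{1,2m\}$; hence $\{a,b\}$ is $\{x_{1},y_{2m}\}$ or $\{x_{2m},y_{1}\}$, and then $z_{2}$, respectively $z_{1}$, is undominated --- a contradiction. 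So $\gammaup_{t}(R)=3$.

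For the diameter: $\textsc{Left}$ and $\textsc{Right}$ are non-adjacent and $N_{R}(\textsc{Left})\cap N_{R}(\textsc{Right})=\emptyset$, so $\dist(\textsc{Left},\textsc{Right})\geq 3$; and since the total dominating set $S$ above induces a triangle, every vertex of $R$ has a neighbour in $S$ and any two vertices of $S$ are within distance one, so any two vertices of $R$ lie within distance $1+1+1=3$. Hence $\diam(R)=3$.

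The main work --- and the step I expect to be the real obstacle --- is vertex-criticality. Since $\delta(R)\geq 2m\geq 4$, $R$ is leaf-free, so it suffices to produce, for every vertex $v$, an adjacent pair inside $R-v$ dominating all of $V(R)\setminus\{v\}$; the rigidity seen in the lower-bound argument means the pairs must be chosen family by family. To halve the casework I would invoke the automorphism $\varphi$ of $R$ defined by $x_{i}\mapsto x_{2m+1-i}$, $y_{j}\mapsto y_{2m+1-j}$, $z_{1}\leftrightarrow z_{2}$, $z_{3}\leftrightarrow\textsc{Right}$ and fixing $\textsc{Left}$ (a routine check against the adjacency rules shows $\varphi$ is an automorphism). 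The intended witnesses are: $\{x_{1},y_{2m}\}$ for $v=z_{2}$ and its $\varphi$-image $\{x_{2m},y_{1}\}$ for $v=z_{1}$; $\{x_{3},z_{2}\}$ for $v=z_{3}$ and its image for $v=\textsc{Right}$; $\{y_{2},y_{3}\}$ for $v=\textsc{Left}$; for $v=x_{i}$ with $2\leq i\leq 2m-1$ the pair $\{x_{k},y_{i}\}$ with $k$ of parity opposite to $i$, together with $\{x_{2},y_{1}\}$ for $v=x_{1}$ (image for $v=x_{2m}$); and for $v=y_{j}$ with $2\leq j\leq 2m-1$ the pair $\{x_{j},y_{\ell}\}$ with $\ell\neq j$ of the same parity as $j$ (such $\ell$ exists because a parity class has $m\geq 2$ indices), together with $\{x_{1},y_{3}\}$ for $v=y_{1}$ (image for $v=y_{2m}$). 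In each case one verifies that the pair is an edge and dominates $V(R)\setminus\{v\}$; the two delicate points are ensuring $z_{1}$ and $z_{2}$ are covered --- which works precisely because the chosen $x$-vertex has index outside $\{1,2m\}$, or the chosen $y$-vertex has index in $[2,2m-1]$ --- and checking that the smallest case $m=2$ still falls under all of these choices. Once every $v$ is handled, $\gammaup_{t}(R-v)\leq 2<3=\gammaup_{t}(R)$ for all $v$, and as $R$ is leaf-free this is exactly the statement that $R$ is $3$-$\gammaup_{t}$-vertex-critical.
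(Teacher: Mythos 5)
Your proposal is correct, and all the details check out against the construction of $R$: the set $\{x_{1},x_{2m},y_{2}\}$ is indeed a total dominating triangle, the forcing argument ($a=x_{i}$ from \textsc{Left}, $b=y_{j}$ from \textsc{Right} and $z_{3}$, opposite parity from $y_{i}$, same parity or $\{1,2m\}$ from $x_{j}$, then $z_{1}$ or $z_{2}$ left undominated) correctly yields $\gammaup_{t}(R)=3$, the map $\varphi$ is genuinely an automorphism, and each listed witness pair is an edge of $R-v$ totally dominating $V(R)\setminus\{v\}$, including in the boundary case $m=2$. Note that the paper states this theorem with no proof at all, so there is nothing to compare your argument against; your write-up supplies exactly the verification the paper omits, and the use of the automorphism $\varphi$ to halve the case analysis is a sensible economy. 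The only point worth making explicit in a final version is that $R$ has no leaf vertices (so every vertex is subject to the criticality condition) and that $R-v$ has no isolated vertices, both of which follow from $\delta(R)=2m\geq 4$ as you observe.
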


Let $H$ be a graph with at least four vertices. Let $V(H) = \{x_{1}, \dots, x_{t}\}$ and $V(\bar{H}) = \{y_{1}, \dots, y_{t}\}$ with $x_{i}$ corresponding to $y_{i}$. Let $A$ be the pointed graph obtained by joining every vertex of $H$ to every vertex of $\bar{H}$ except edges between the corresponding vertices, and adding two new vertices \textsc{Left} and \textsc{Right} such that \textsc{Left} is adjacent to every vertex in $H$ and \textsc{Right} is adjacent to every vertex in $\bar{H}$. It can be shown that $A$ is a $3$-$\gammaup_{t}$-vertex-critical graph if and only if $\gammaup_{t}(H) = \gammaup_{t}(\bar{H}) = 2$. Simply write the \textsc{Left}-vertex as $x$ and \textsc{Right}-vertex as $y$. Suppose that $\gammaup_{t}(H) = \gammaup_{t}(\bar{H}) = 2$. A minimum total dominating set of $H$ totally dominates $A - y$ and a minimum total dominating set of $\bar{H}$ totally dominates $A - x$. For any vertex $x_{i}$, the two vertices $y_{i}$ and a nonadjacent vertex $x_{j}$ of $x_{i}$ totally dominates $A - x_{i}$; similarly, for any vertex $y_{i}$, the two vertices $x_{i}$ and a nonadjacent vertex $y_{j}$ of $y_{i}$ totally dominates $A - y_{i}$. But $\gammaup_{t}(A) > 2$, thus $A$ is a $3$-$\gammaup_{t}$-vertex-critical graph. Conversely, if $G$ is a $3$-$\gammaup_{t}$-vertex-critical graph, then a minimum total dominating set of $A - y$ is also a minimum total dominating set of $H$ and a minimum total dominating set of $A - x$ is also a minimum total dominating set of $\bar{H}$, and then $\gammaup_{t}(H) = \gammaup_{t}(\bar{H}) = 2$. In what follows, we assume that $\gammaup_{t}(H) = \gammaup_{t}(\bar{H}) = 2$.  Notice that $\diam(A) = 3$.
 
 \begin{remark}%
For every $t \geq 4$, we can find at least one graph $H$ on $t$ vertices with $\gammaup_{t}(H) = 2$ and $\gammaup_{t}(\bar{H}) = 2$. For instance, let $K_{t-2}$ be a complete graph on $t-2$ vertices, and let $H$ be the graph on $t$ vertices obtained from $K_{t-2}$ by attaching a path $xx_{1}x_{2}$. It is easy to check that $\gammaup_{t}(H) = 2$ and $\gammaup_{t}(\bar{H}) = 2$.
\end{remark}

Let $Q$ be the pointed graph obtained from two copies of $A$, called $A_{1}$ and $A_{2}$, by deleting the \textsc{Right}-vertex $y$ from $A_{1}$ and the \textsc{Left}-vertex $x$ from $A_{2}$, and joining every neighbor of $y$ in $A_{1}$ to every neighbor of $x$ in $A_{2}$. Notice that $\diam(Q) = 5$ and $\gammaup_{t}(Q) = 4$. By \autoref{TDIAM}, the graph $Q$ is not a $4$-$\gammaup_{t}$-vertex-critical graph. Let $Q^{(1)} = Q$ and $Q^{(n)} = Q^{(n-1)} \bullet Q$. We simple denote $R  \bullet  Q^{(n)}$ by $\mathfrak{C}_{n}$.

Let $J_{1}$ and $J_{3}$ be disjoint union of $tK_{2}$ and let $J_{2}$ be $\bar{tK_{2}}$, where $t \geq 2$. Let $J$ be the pointed graph obtained from $J_{1} \cup J_{2} \cup J_{3}$ by joining every vertex of $J_{1}$ to every vertex of $J_{2}$ except the edges corresponding vertices in $J_{1}$ and $J_{2}$, similarly, joining every vertex of $J_{2}$ to every vertex of $J_{3}$ except the edges corresponding vertices in $J_{2}$ and $J_{3}$, adding a new \textsc{Left} vertex $x$ adjacent to every vertex of $J_{1}$ and adding a new \textsc{Right} vertex $y$ adjacent to every vertex in $J_{3}$. It is easy to check that $J$ is a $4$-$\gammaup_{t}$-vertex-critical graph with diameter $4$.

\begin{theorem}
(a) $\gammaup_{t}(R  \bullet  Q^{(n)}) \geq 3n + 3$; (b) $\gammaup_{t}(R  \bullet  Q^{(n)} - y) \geq 3n + 2$; (c) $\gammaup_{t}(R  \bullet  Q^{(n)} - N[y]) \geq 3n + 2$.
\end{theorem}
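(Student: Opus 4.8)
The plan is to prove (a), (b) and (c) simultaneously by induction on $n$, using the recursion $\mathfrak{C}_{n}=\mathfrak{C}_{n-1}\bullet Q$. Throughout, $y$ denotes the \textsc{Right}-vertex of $\mathfrak{C}_{n}$, and I write $H_{1},\bar{H}_{1},H_{2},\bar{H}_{2}$ for the four consecutive ``levels'' of the freshly attached copy of $Q$, read from its \textsc{Left}-vertex $\ell$ towards its \textsc{Right}-vertex $y$; thus $N_{Q}(\ell)=H_{1}$ and $N_{Q}(y)=\bar{H}_{2}$. The base case $n=0$ is $\mathfrak{C}_{0}=R$: here $\gammaup_{t}(R)=3$; since $R$ is $3$-$\gammaup_{t}$-vertex-critical without leaf vertices, $\gammaup_{t}(R-y)=2$; and $\gammaup_{t}(R-N[y])\geq 2$ because $R-N_{R}[y]$ has no isolated vertex.

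For the inductive step put $G'=\mathfrak{C}_{n-1}$ with \textsc{Right}-vertex $b'$, let $b$ be the bridge of $\mathfrak{C}_{n}$ obtained by identifying $b'$ with $\ell$, and set $S=N_{G'}(b')$. The key structural fact is that $b$ is a cut vertex: no edge of $\mathfrak{C}_{n}$ joins $V(G')\setminus\{b\}$ to $V(Q)\setminus\{b\}$. Consequently, for any total dominating set $D$ of $\mathfrak{C}_{n}$ (respectively of $\mathfrak{C}_{n}-y$ or of $\mathfrak{C}_{n}-N[y]$), the set $D\cap V(G')$ totally dominates $V(G')\setminus\{b\}$, and $D\cap V(Q)$ totally dominates every surviving vertex of $V(Q)\setminus\{\ell\}$. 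I split on whether $D$ meets $S$. If $D\cap S\neq\emptyset$, then $D\cap V(G')$ totally dominates all of $G'$, so $|D\cap V(G')|\geq\gammaup_{t}(G')\geq 3n$ by hypothesis (a), whence $|D\cap(V(G')\setminus\{b\})|\geq 3n-1$. If $D\cap S=\emptyset$, then $D$ contains no neighbour of $b$ in $G'$, so $D$ must contain a neighbour of $b$ inside $Q$ — necessarily a vertex of $H_{1}$ — and moreover $D\cap(V(G')\setminus N_{G'}[b'])$ is a total dominating set of $G'-N_{G'}[b']$, so $|D\cap(V(G')\setminus\{b\})|\geq\gammaup_{t}(G'-N_{G'}[b'])\geq 3n-1$ by hypothesis (c). In all cases $|D\cap(V(G')\setminus\{b\})|\geq 3n-1$, and in the case $D\cap S=\emptyset$ one additionally has $|D\cap V(G')|\geq 3n$ as soon as $b\in D$.

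It remains to bound how many vertices of $D$ lie on the $Q$-side, and here the work is self-contained in $Q$. I will prove: \textbf{(A)} any set totally dominating $V(Q)\setminus\{\ell\}$ in $Q$ has at least $4$ vertices; \textbf{(B)} any set totally dominating $V(Q-y)\setminus\{\ell\}$ in $Q-y$ has at least $3$ vertices; and \textbf{(C)} any set $X$ totally dominating $H_{1}\cup\bar{H}_{1}\cup H_{2}$ in $Q-N_{Q}[y]$ satisfies $|X\cap(H_{1}\cup\bar{H}_{1}\cup H_{2})|\geq 2$, and if in addition $\ell\notin X$ and $X$ meets $H_{1}$ then $|X|\geq 3$. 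Each of these is obtained from one two-block argument: split $V(Q)$ into $\{\ell\}\cup H_{1}\cup\bar{H}_{1}$ and $H_{2}\cup\bar{H}_{2}\cup\{y\}$; totally dominating $y$ (or $\bar{H}_{2}$) forces two vertices in the right block, totally dominating $H_{1}$ forces a vertex in the left block, and the remaining borderline configurations are killed by the defining feature of $A$ — hence of $Q$ — that for $i\neq j$ the two adjacencies ``$x_{i}x_{j}$ in $H$'' and ``$y_{i}y_{j}$ in $\bar{H}$'' cannot both hold, so a hypothetical small dominating set is forced to demand two contradictory adjacencies between a vertex and the partner of another. Granting (A)--(C): statement (a) follows since $V(\mathfrak{C}_{n})=(V(G')\setminus\{b\})\cup V(Q)$ is a partition and $|D|\geq(3n-1)+4$; statement (b) follows since $V(\mathfrak{C}_{n}-y)=(V(G')\setminus\{b\})\cup(V(Q)\setminus\{y\})$ and $|D|\geq(3n-1)+3$; and statement (c) follows from $|D|=|D\cap V(G')|+|D\cap(H_{1}\cup\bar{H}_{1}\cup H_{2})|$ by the sub-cases: if $D\cap S\neq\emptyset$ or $b\in D$, then $|D\cap V(G')|\geq 3n$ and (C)(i) gives the other $2$; and if $D\cap S=\emptyset$ and $b\notin D$, then $|D\cap V(G')|\geq 3n-1$ while $D\cap(H_{1}\cup\bar{H}_{1}\cup H_{2})$ meets $H_{1}$, avoids $\ell=b$, and totally dominates $H_{1}\cup\bar{H}_{1}\cup H_{2}$, so (C)(ii) supplies $3$ more.

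The step I expect to be the main obstacle is the accounting in part (c). Whereas (a) and (b) come out with room to spare, a naive decomposition in the case $D\cap S=\emptyset$ falls short by exactly one vertex, because $\gammaup_{t}(\mathfrak{C}_{n-1}-N[y])$ is one less than $\gammaup_{t}(\mathfrak{C}_{n-1})$. Recovering that vertex is precisely what forces both the refined estimate (C)(ii) — the only place where the ``complementary adjacencies'' observation is indispensable — and the careful distinction of whether the bridge $b$ itself belongs to $D$, so that $b$ is neither double-counted nor lost when passing between $V(G')$ and $V(G')\setminus\{b\}$.
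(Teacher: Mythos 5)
Your proposal is correct and follows essentially the same route as the paper: induction on $n$ with all three statements carried simultaneously, decomposition of a total dominating set at the cut vertex $b=\ell$, the case split on whether $D$ dominates $b$ from the $\mathfrak{C}_{n-1}$ side (equivalently, whether $D\cap V(\mathfrak{C}_{n-1})$ totally dominates $\mathfrak{C}_{n-1}$), and the inductive hypotheses (a) and (c) supplying the $3n$ or $3n-1$ vertices on the left. The only difference is that you isolate and justify the local counting facts (A)--(C) about $Q$ (in particular the complementary-adjacency contradiction needed for the tight case in (c)), which the paper simply asserts as inequalities such as $|D\cap V(Q_{n})|\geq 4$ and $|D_{2}\cap(Q_{n}-N[y])|\geq 3$ without proof.
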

\begin{proof}%
We prove the results by mathematical induction.

{\bf Basis step:} If $n = 0$, then the results are trivially true.

{\bf Inductive step:} Suppose that the results are true for all values less than $n$. Let $D, D_{1}$ and $D_{2}$ be a minimum total dominating set of $\mathfrak{C}_{n}$, $\mathfrak{C}_{n} - y$ and $\mathfrak{C}_{n} - N[y]$, respectively. Denote the \textsc{Left} vertex of $Q_{n}$ by $x$ and the \textsc{Right} vertex of $Q_{n}$ by $y$. If $D \cap V(\mathfrak{C}_{n-1})$ totally dominates $\mathfrak{C}_{n-1}$, then $|D \cap V(\mathfrak{C}_{n-1})| \geq 3n$, but $|D \setminus V(\mathfrak{C}_{n-1})| \geq 3$, thus $|D| \geq 3n + 3$. So we may assume that $D \cap V(\mathfrak{C}_{n-1})$ does not totally dominates $\mathfrak{C}_{n-1}$. Notice that $D \cap V(Q_{n})$ must totally dominate $Q_{n}$ and $|D \cap V(Q_{n})| \geq 4$. If $x \notin D$, then $D \cap R_{1}(Q_{n-1}) = \emptyset$ and $D \cap V(\mathfrak{C}_{n-1})$ totally dominates $\mathfrak{C}_{n-1} - x$ and $|D \cap V(\mathfrak{C}_{n-1})| \geq 3n-1$, thus $|D| \geq 3n -1 + 4 = 3n+3$. If $x \in D$, then $D \cap R_{1}(Q_{n-1}) = \emptyset$ and $D \cap V(\mathfrak{C}_{n-1} -x)$ totally dominates $\mathfrak{C}_{n-1} - N[x]$ and $|D \cap V(\mathfrak{C}_{n-1} -x)| \geq 3n-1$. Since $D \cap V(Q_{n})$ totally dominates $Q_{n}$ and $x \in D$, it follows that $|D \cap V(Q_{n})| \geq 5$, and then $|D| \geq 3n - 1 + 5 = 3n+4$. Hence, we have that $\gammaup_{t}(R  \bullet  Q^{(n)}) \geq 3n + 3$.

If $D_{1} \cap V(\mathfrak{C}_{n-1})$ totally dominates $\mathfrak{C}_{n-1}$, then $|D_{1} \cap V(\mathfrak{C}_{n-1})| \geq 3n$, but $|D_{1} \setminus V(\mathfrak{C}_{n-1})| \geq 2$, thus $|D_{1}| \geq 3n + 2$. So we may assume that $D_{1} \cap V(\mathfrak{C}_{n-1})$ does not totally dominates $\mathfrak{C}_{n-1}$. If $x \notin D_{1}$, then $D_{1} \cap R_{1}(Q_{n-1}) = \emptyset$ and $D_{1} \cap V(\mathfrak{C}_{n-1})$ totally dominates $\mathfrak{C}_{n-1} - x$ and $|D_{1} \cap V(\mathfrak{C}_{n-1})| \geq 3n-1$. Notice that $D_{1} \setminus V(\mathfrak{C}_{n-1})$ totally dominates $Q_{n} - y$ and $D_{1} \cap L_{1}(Q_{n}) \neq \emptyset$ and $|D_{1} \setminus V(\mathfrak{C}_{n-1})| \geq 4$, thus $|D_{1}| \geq 3n - 1 + 4 = 3n + 3$. If $x \in D_{1}$, then $D_{1} \cap R_{1}(Q_{n-1}) = \emptyset$ and $D_{1} \cap V(\mathfrak{C}_{n-1} - x)$ totally dominates $\mathfrak{C}_{n-1} - N[x]$ and $|D_{1} \cap V(\mathfrak{C}_{n-1}-N[x])| \geq 3n - 1$. Notice that $x \in D_{1}$ and $D_{1} \cap L_{1}(Q_{n}) \neq \emptyset$, and  then $|D_{1} \cap (Q_{n} - y)| \geq 4$, thus $|D_{1}| \geq 3n - 1 + 4 = 3n + 3$. Hence, we have that $\gammaup_{t}(R  \bullet  Q^{(n)} - y) \geq 3n + 2$.

If $D_{2} \cap V(\mathfrak{C}_{n-1})$ totally dominates $\mathfrak{C}_{n-1}$, then $|D_{2} \cap V(\mathfrak{C}_{n-1})| \geq 3n$, but $|D_{2} \setminus V(\mathfrak{C}_{n-1})| \geq 2$, thus $|D_{2}| \geq 3n + 2$. So we may assume that $D_{2} \cap V(\mathfrak{C}_{n-1})$ does not totally dominates $\mathfrak{C}_{n-1}$. If $x \notin D_{2}$, then $D_{2} \cap R_{1}(Q_{n-1}) = \emptyset$ and $D_{2} \cap V(\mathfrak{C}_{n-1})$ totally dominates $\mathfrak{C}_{n-1} - x$ and $|D_{2} \cap V(\mathfrak{C}_{n-1})| \geq 3n-1$. Notice that $D_{2} \setminus V(\mathfrak{C}_{n-1})$ totally dominates $Q_{n} - N[y]$ and $D_{2} \cap L_{1}(Q_{n}) \neq \emptyset$ and $|D_{2} \setminus V(\mathfrak{C}_{n-1})| \geq 3$, thus $|D_{2}| \geq 3n - 1 + 3 = 3n + 2$. If $x \in D_{2}$, then $D_{2} \cap R_{1}(Q_{n-1}) = \emptyset$ and $D_{2} \cap V(\mathfrak{C}_{n-1} - x)$ totally dominates $\mathfrak{C}_{n-1} - N[x]$ and $|D_{2} \cap V(\mathfrak{C}_{n-1}-N[x])| \geq 3n - 1$. Notice that $x \in D_{2}$ and $D_{2} \cap L_{1}(Q_{n}) \neq \emptyset$, and then $|D_{2} \cap (Q_{n} - N[y])| \geq 3$, thus $|D_{1}| \geq 3n -1 + 3 = 3n + 2$. Hence, we have that $\gammaup_{t}(R  \bullet  Q^{(n)} - N[y]) \geq 3n + 2$.
\end{proof}

\begin{corollary}%
(a) $\gammaup_{t}(R  \bullet  Q^{(n)}) = 3n + 3$; (b) $\gammaup_{t}(R  \bullet  Q^{(n)} - y) = 3n + 2$; (c) $\gammaup_{t}(R  \bullet  Q^{(n)} - N[y]) = 3n + 2$.
\end{corollary}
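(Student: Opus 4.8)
The plan is as follows. The lower bounds in (a), (b) and (c) are exactly the conclusions of the preceding theorem, so it remains only to produce total dominating sets of $R\bullet Q^{(n)}$, of $R\bullet Q^{(n)}-y$, and of $R\bullet Q^{(n)}-N[y]$ of sizes $3n+3$, $3n+2$ and $3n+2$ respectively. I would establish these three upper bounds by a single induction on $n$, run in parallel with the auxiliary statement that $\mathfrak{C}_{n}=R\bullet Q^{(n)}$ has a minimum total dominating set containing its \textsc{Right}-vertex.

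For the base case $n=0$ we have $\mathfrak{C}_{0}=R$: since $R$ is $3$-$\gammaup_{t}$-vertex-critical and has no leaf vertex, \autoref{Containing} gives $\gammaup_{t}(R)=3$, a minimum total dominating set through any prescribed vertex, and $\gammaup_{t}(R-y)=2$, while the remaining bound $\gammaup_{t}(R-N[y])\le 2$ is checked directly from the explicit description of $R$ (for instance $\{z_{1},x_{1}\}$ totally dominates $R-N[y]$). For the inductive step, write $\mathfrak{C}_{n}=\mathfrak{C}_{n-1}\bullet Q$ and let $w$ be the cut vertex obtained by identifying the \textsc{Right}-vertex of $\mathfrak{C}_{n-1}$ with the \textsc{Left}-vertex $x$ of the new copy of $Q$; because $\bullet$ is a pure identification, $w$ is a cut vertex, so every total dominating set of $\mathfrak{C}_{n}$ splits over the two sides, which interact only through $w$. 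Inside $Q$ the facts I would use are that two vertices forming a minimum total dominating set of the ``$H$''-side of a constituent copy of $A$ totally dominate that $A$ minus its \textsc{Right}-vertex --- in particular they dominate the \textsc{Left}-vertex --- and symmetrically for the ``$\bar H$''-side; consequently $Q$ has a minimum total dominating set of size $4$ dominating $x$, and $Q-N[y]$ has a total dominating set of size $3$ dominating $x$. Gluing a minimum total dominating set of $\mathfrak{C}_{n-1}-w$, which has size $3n-1$ by part (b) for $n-1$, to such a size-$4$ set in $Q$ yields a total dominating set of $\mathfrak{C}_{n}$ of size $3n+3$, giving (a) at $n$; gluing the same size-$(3n-1)$ set to a size-$3$ total dominating set of $Q-N[y]$ dominating $x$ gives (c) at $n$.

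The delicate case, and the one I expect to be the real obstacle, is (b). The analogous ``$\mathfrak{C}_{n-1}-w$ plus a fragment of $Q$'' split overshoots by one: each constituent copy of $A$ already forces two vertices to dominate both its ``$H$''- and ``$\bar H$''-sides --- the single ``corresponding'' non-edge prevents one vertex from doing it --- so the portion of $Q$ strictly between its two pointed vertices cannot be totally dominated by fewer than four vertices. To save the last vertex one must instead start from a minimum total dominating set of $\mathfrak{C}_{n-1}$ \emph{itself}, of size $3n$ by (a) for $n-1$, chosen so as to \emph{contain} $w$; then $w$ already dominates all of $N_{Q}(x)$, and it suffices to add a minimum total dominating set (two vertices) of the ``$H$''-side of the second constituent copy of $A$ in $Q$, which dominates the rest of $Q-y$ through the join edges, producing a total dominating set of $\mathfrak{C}_{n}-y$ of size $3n+2$. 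This is exactly why the auxiliary claim must be carried through the induction: at $n=0$ it is \autoref{Containing} for the vertex-critical graph $R$, and for $n\ge 1$ I would obtain it from the fact that $\mathfrak{C}_{n}$ is itself $\gammaup_{t}$-vertex-critical, which follows by feeding $\mathfrak{C}_{n-1}$ and $Q$ into the coalescence characterisation (\autoref{coalescence}, or its analogue for the $\bullet$-operation applied to the constituent graphs $A$), using $\gammaup_{t}(\mathfrak{C}_{n-1}-N[w])=\gammaup_{t}(\mathfrak{C}_{n-1})-1$ from parts (a) and (c) at $n-1$ together with the corresponding inequalities for $Q$. Beyond this, the only remaining work is the routine verification that each glued set has no isolated vertex and dominates every vertex, which is immediate from the cut-vertex structure.
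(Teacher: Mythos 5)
Your three constructions for the upper bounds are correct and are essentially the sets the paper itself uses implicitly (the corollary is stated without proof, and the layered sets $D_{0,r}\cup D_{1,r}\cup\cdots$ appear in the proof of the following theorem): for (a) one may take a minimum total dominating set of $\mathfrak{C}_{n-1}-w$ together with a $\gammaup_{t}$-set of $Q$, for (c) the same set together with a $3$-element total dominating set of $Q-N[y]$, and for (b) a minimum total dominating set of $\mathfrak{C}_{n-1}$ \emph{containing} $w$ together with a $2$-element total dominating set of the $H$-side of the second copy of $A$. You have also correctly identified that (b) is the delicate case and that it hinges on the auxiliary claim that $\mathfrak{C}_{n-1}$ has a minimum total dominating set through its \textsc{Right}-vertex.

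The gap is in how you propose to justify that auxiliary claim for $n\ge 1$. You want to deduce it from $\mathfrak{C}_{n}$ being $\gammaup_{t}$-vertex-critical, obtained by ``feeding $\mathfrak{C}_{n-1}$ and $Q$ into \autoref{coalescence}.'' This fails twice over. First, \autoref{coalescence} requires both factors to be $\gammaup_{t}$-vertex-critical, and the paper states explicitly that $Q$ is \emph{not} a $4$-$\gammaup_{t}$-vertex-critical graph, so the hypothesis is not met (and $\bullet$ is in any case a different operation from $*$: it identifies the two assigned diametrical vertices, but the criticality transfer still needs both pieces critical). Second, the conclusion itself is false: $\diam(\mathfrak{C}_{n})=5n+3$, while a $(3n+3)$-$\gammaup_{t}$-vertex-critical graph without leaf vertices would have diameter at most $\frac{5(3n+3)-7}{3}=5n+\frac{8}{3}$ by \autoref{TDIAM}, so $\mathfrak{C}_{n}$ cannot be vertex-critical for $n\ge 1$; hence \autoref{Containing} is not available for it. The repair is easy and does not need criticality at all: exhibit the set directly. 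Let $D_{0,r}$ be a minimum total dominating set of $R$ containing its \textsc{Right}-vertex (this much does follow from \autoref{Containing} applied to $R$), and for each $i$ let $D_{i,r}$ consist of one vertex from each of $R_{0}(Q_{i}),R_{1}(Q_{i}),R_{2}(Q_{i})$ totally dominating $R_{0}\cup R_{1}\cup R_{2}\cup R_{3}$ of $Q_{i}$; then $D_{0,r}\cup D_{1,r}\cup\cdots\cup D_{n-1,r}$ is a total dominating set of $\mathfrak{C}_{n-1}$ of size $3n$ containing $w=y_{n-1}$ (the layer $L_{1}(Q_{i})$ being dominated by $y_{i-1}$), and it is minimum by part (a) at $n-1$. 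With that substitution your argument goes through.
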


\begin{theorem}%
The graph $R  \bullet  Q^{(n)}  \bullet  J$ is $(3n+6)$-$\gammaup_{t}$-vertex-critical graph with diameter $5n + 7$.
\end{theorem}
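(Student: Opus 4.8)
The plan is to view $G = R \bullet Q^{(n)} \bullet J$ as the coalescence $\mathfrak{C}_{n}(y*x)J$ of $\mathfrak{C}_{n} = R \bullet Q^{(n)}$ at its \textsc{Right}-vertex $y$ with $J$ at its \textsc{Left}-vertex $x$, and to write $w$ for the identified vertex. Then $w$ is a cut vertex, $V(G) = V(\mathfrak{C}_{n}) \cup V(J)$ with $V(\mathfrak{C}_{n}) \cap V(J) = \{w\}$, and every vertex other than $w$ has all of its neighbours inside one of the two parts. Three things have to be proved: $\diam(G) = 5n+7$; $\gammaup_{t}(G) = 3n+6$; and $\gammaup_{t}(G-v) < 3n+6$ for every vertex $v$ (the blocks have no leaf vertices, so $G$ has none, and the critical condition must hold for all vertices). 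For the diameter, use that in a pointed graph the \textsc{Left}- and \textsc{Right}-vertices are at distance equal to the diameter and that distances through a cut vertex add up: since $G$ is the chain $R \bullet Q \bullet \dots \bullet Q \bullet J$ with $n$ copies of $Q$, its \textsc{Left}-vertex (in $R$) and its \textsc{Right}-vertex (in $J$) lie at distance $\diam(R) + n\diam(Q) + \diam(J) = 3 + 5n + 4 = 5n+7$, and any other pair of vertices is separated by at most the sum of the diameters of the blocks lying between them, hence by at most $5n+7$.

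For $\gammaup_{t}(G) = 3n+6$: the upper bound is witnessed by $D_{1} \cup D_{2}$, where $D_{1}$ is a minimum total dominating set of $\mathfrak{C}_{n} - N[y]$ (of size $3n+2$ by the corollary) and $D_{2}$, which exists by \autoref{Containing} because $J$ is $4$-$\gammaup_{t}$-vertex-critical, is a minimum total dominating set of $J$ with $x \in D_{2}$; since $w = x$ dominates all of $N_{\mathfrak{C}_{n}}(w)$, this is a total dominating set of $G$ of size $3n+6$. For the lower bound, let $D$ be a minimum total dominating set of $G$; because $w$ is a cut vertex, $D \cap V(\mathfrak{C}_{n})$ totally dominates $\mathfrak{C}_{n} - w$ and $D \cap V(J)$ totally dominates $J - w$, and since $w$ must be dominated, at least one of $D \cap V(\mathfrak{C}_{n})$, $D \cap V(J)$ is a total dominating set of its whole block. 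If $D \cap V(\mathfrak{C}_{n})$ totally dominates $\mathfrak{C}_{n}$, then $|D \cap V(\mathfrak{C}_{n})| \geq \gammaup_{t}(\mathfrak{C}_{n}) = 3n+3$; if $D \cap V(J)$ totally dominates $J$ as well this gives $|D| \geq (3n+3) + \gammaup_{t}(J) - 1 = 3n+6$, and if not, $D \cap V(J)$ fails to dominate $w$ only, so $(D \cap V(J)) \setminus \{w\}$ totally dominates $J - N_{J}[x]$ and, accounting for the possible membership of $w$, we again reach $|D| \geq (3n+3) + \gammaup_{t}(J - N_{J}[x]) \geq 3n+6$. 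The symmetric case, in which $D \cap V(J)$ totally dominates $J$ but $D \cap V(\mathfrak{C}_{n})$ does not, runs identically using $\gammaup_{t}(J) = 4$ and $\gammaup_{t}(\mathfrak{C}_{n} - N[y]) = 3n+2$. This argument uses the facts $\gammaup_{t}(J) = 4$ and $\gammaup_{t}(J - N_{J}[x]) \geq 3$ for the right block (the latter is exactly the hypothesis \autoref{coalescence} imposes on the factor $J$) together with the values $\gammaup_{t}(\mathfrak{C}_{n}) = 3n+3$ and $\gammaup_{t}(\mathfrak{C}_{n} - N[y]) = 3n+2$ from the corollary.

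For the critical condition, the vertices of $J$ come directly from the criticality of $J$: for $v \in V(J) \setminus \{x\}$, a minimum total dominating set of $\mathfrak{C}_{n} - y$ (size $3n+2$) together with a minimum total dominating set of $J-v$ (size $3$) is a total dominating set of $G-v$, so $\gammaup_{t}(G-v) \leq 3n+5$; and for $v = x = w$ the graph $G - w$ is the disjoint union of $\mathfrak{C}_{n} - y$ and $J - x$, whence $\gammaup_{t}(G-w) = (3n+2) + 3 = 3n+5$. The genuinely hard case is $v \in V(\mathfrak{C}_{n}) \setminus \{w\}$, where $G - v = (\mathfrak{C}_{n} - v) \bullet J$. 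Here the obstacle is that $\mathfrak{C}_{n}$ is itself \emph{not} $\gammaup_{t}$-vertex-critical --- its diameter $5n+3$ exceeds the bound of \autoref{TDIAM} for a $(3n+3)$-critical graph (for $n \geq 1$) --- so $\gammaup_{t}(\mathfrak{C}_{n} - v)$ may be as large as $3n+3$, and the naive estimate $\gammaup_{t}(G-v) \leq \gammaup_{t}(\mathfrak{C}_{n}-v) + \gammaup_{t}(J-x)$ is then insufficient.

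To get around this I would reduce the claim for such $v$ to the dichotomy
\[
\gammaup_{t}(\mathfrak{C}_{n} - v) \leq 3n+2 \qquad\text{or}\qquad \gammaup_{t}\bigl(\mathfrak{C}_{n} - N[w] - v\bigr) \leq 3n+1 ,
\]
since in the first case a minimum total dominating set of $\mathfrak{C}_{n} - v$ together with a minimum total dominating set of $J - x$ totally dominates $G-v$ with at most $(3n+2) + 3 = 3n+5$ vertices, while in the second a minimum total dominating set of $\mathfrak{C}_{n} - N[w] - v$ together with a minimum total dominating set of $J$ containing $w$ does so with at most $(3n+1) + 4 = 3n+5$ vertices. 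The dichotomy I would establish by locating $v$ in its block of the chain $R \bullet Q_{1} \bullet \dots \bullet Q_{n}$: if $v$ lies in $R$ and is not the gluing vertex of $R$ and $Q_{1}$, use that $R$ is $3$-$\gammaup_{t}$-vertex-critical, so $\gammaup_{t}(R-v) = 2$ and the corresponding reduction on $\mathfrak{C}_{n}$ lowers the count; if $v$ lies in a copy $Q_{i}$, use the explicit domination structure of $Q$ (a coalescence of two copies of $A$) together with the extra coverage that the adjacent blocks of the chain supply to the \textsc{Left}- and \textsc{Right}-vertices of $Q_{i}$; and for the finitely many gluing vertices of $\mathfrak{C}_{n}$, read the required inequality off the values $\gammaup_{t}(\mathfrak{C}_{n}) = 3n+3$, $\gammaup_{t}(\mathfrak{C}_{n} - y) = 3n+2$, $\gammaup_{t}(\mathfrak{C}_{n} - N[y]) = 3n+2$. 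This case-by-case verification of the dichotomy is where I expect the main difficulty, and it is precisely the point at which the critical structure of the end blocks $R$ and $J$ --- and the slack in diameter and in total domination number gained by using $J$ on the right --- is used in an essential way; for $n = 0$ one has $\mathfrak{C}_{0} = R$, which is critical, and the whole statement is an instance of \autoref{coalescence}.
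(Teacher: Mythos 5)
Your framework is the same as the paper's: additivity of distances across the cut vertices for the diameter, the corollary's values $\gammaup_{t}(\mathfrak{C}_{n}) = 3n+3$, $\gammaup_{t}(\mathfrak{C}_{n}-y) = 3n+2$, $\gammaup_{t}(\mathfrak{C}_{n}-N[y]) = 3n+2$ for the lower bound on $\gammaup_{t}(G)$ (you condition on the left block, the paper on the right; both work), and an explicit union of block-wise dominating sets for the upper bound. Your reduction of criticality to the dichotomy ``$\gammaup_{t}(\mathfrak{C}_{n}-v) \leq 3n+2$ or $\gammaup_{t}(\mathfrak{C}_{n}-N[w]-v) \leq 3n+1$'' is correct and is a clean way to package what the paper does implicitly: its constructions for $v$ near the right end of the chain realize the first branch (right-type triples $D_{i,r}$ up to $v$'s block, a two-vertex patch, then $D_{n+1,r}$), and its constructions for $v$ near the left end realize the second (left-type triples $D_{i,l}$ after $v$'s block, ending with a set of $J$ containing $w$).

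The genuine gap is that you never verify the dichotomy, and that verification is the actual content of the theorem. For $v$ in the interior of a copy $Q_{i}$ you appeal to ``the explicit domination structure of $Q$'' without exhibiting the sets; the paper does this concretely, splitting $V(Q_{i})$ into $L_{0}\cup L_{1}\cup L_{2}$ and $L_{3}\cup L_{4}\cup L_{5}$ and showing in each case that two adjacent vertices of $L_{1}(Q_{i})\cup L_{2}(Q_{i})$ (resp.\ $L_{3}(Q_{i})\cup L_{4}(Q_{i})$) totally dominate $L_{0}\cup\dots\cup L_{3}$ (resp.\ $L_{2}\cup\dots\cup L_{5}$) minus $v$ --- a fact that rests on $Q$ being a coalescence of two copies of $A$ with $\gammaup_{t}(H)=\gammaup_{t}(\bar H)=2$ and is not automatic. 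Moreover the interior cases need a nontrivial splice at the junction between $v$'s block and its neighbour (the paper inserts two extra adjacent vertices in $R_{2}(Q_{i-1})$ or $L_{2}(Q_{i+1})$ precisely so that the level adjacent to the gluing vertex is still dominated once the patch replaces a full triple), and your count must be checked to still come out at $3n+1$ or $3n+2$ after this insertion. Finally, your claim that the gluing vertices of $\mathfrak{C}_{n}$ can be ``read off'' the three displayed values of $\gammaup_{t}$ is not right as stated: those values concern only the \textsc{Right}-vertex of $\mathfrak{C}_{n}$, whereas the internal gluing vertices $L_{5}(Q_{i})=L_{0}(Q_{i+1})$ require the same explicit constructions as the other interior vertices. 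Until these cases are written out, the criticality for $v \in V(\mathfrak{C}_{n})\setminus\{w\}$ is asserted rather than proved.
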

\begin{proof}%
If $n = 0$, then the statement follows by \autoref{coalescence}. So we may assume that $n \geq 1$. Denote the graph $R \bullet Q^{(n)} \bullet J$ by $G$ and denote the i-th copy of $Q$ by $Q_{i}$ with \textsc{Left} $x_{i}$ and \textsc{Right} $y_{i}$. Denote the \textsc{Left} vertex of $J$ by $x$ and the \textsc{Right} vertex by $y$. Let $D$ be a minimum total dominating set of $G$. Notice that there exists a minimum total dominating set $D_{i,l}$ of $Q_{i} - N[y_{i}]$ containing $x_{i}$, that is, a vertex from each of $L_{0}(Q_{i}), L_{1}(Q_{i})$ and $L_{2}(Q_{i})$ totally dominates $L_{0}(Q_{i}) \cup L_{1}(Q_{i}) \cup L_{2}(Q_{i}) \cup L_{3}(Q_{i})$; by symmetry, there exists a minimum total dominating set $D_{i,r}$ of $Q_{i} - N[x_{i}]$ containing $y_{i}$, that is, a vertex from each of $R_{0}(Q_{i}), R_{1}(Q_{i})$ and $R_{2}(Q_{i})$ totally dominates $R_{0}(Q_{i}) \cup R_{1}(Q_{i}) \cup R_{2}(Q_{i}) \cup R_{3}(Q_{i})$. For the graph $R$, there exists a minimum total dominating set $D_{0, l}$ of $R - \textsc{Right}$ and a minimum total dominating set $D_{0, r}$ of $R$ containing the \textsc{Right} vertex. For the graph $J$, there exists a minimum total dominating set $D_{n+1, l}$ containing the \textsc{Left} vertex and a minimum total dominating set $D_{n+1, r}$ of $J - \textsc{Left}$.

If $D \cap V(\mathfrak{C}_{n})$ totally dominates $\mathfrak{C}_{n}$, then $|D \cap V(\mathfrak{C}_{n})| \geq 3n + 3$ and $|D| \geq (3n+3) + 3 = 3n+6$. So we may assume that $D \cap V(\mathfrak{C}_{n})$ does not totally dominates $\mathfrak{C}_{n}$. If $x \notin D$, then $D \cap R_{1}(Q_{n}) = \emptyset$ and $D \cap V(\mathfrak{C}_{n})$ totally dominates $\mathfrak{C}_{n} - y_{n}$ and $|D \cap V(\mathfrak{C}_{n})| \geq 3n + 2$, thus $|D| \geq 3n + 2 + 4 = 3n+6$. If $x \in D$, then $D \cap R_{1}(Q_{n}) = \emptyset$ and $D \cap (\mathfrak{C}_{n} - y_{n})$ totally dominates $\mathfrak{C}_{n} - N[y_{n}]$ and $|D| \geq 3n + 2 + 4 = 3n + 6$. There exists a total dominating set with $3n+6$ vertices, such as $D_{0, r} \cup D_{1, r} \cup D_{2, r} \cup \dots D_{n, r} \cup D_{n+1, r}$. Hence, we have that $\gammaup_{t}(R \bullet Q^{(n)} \bullet J) = 3n + 6$.

Let $v$ be an arbitrary vertex. If $v \in R$, then a minimum total dominating set of $R - v$ and $D_{1, r} \cup D_{2, r} \cup \dots D_{n, r} \cup D_{n+1, r}$ form a total dominating set of $G - v$ with $3n+5$ vertices.

If $v \in J$, then $D_{0, r} \cup D_{1, r} \cup \dots D_{n-1, r}$ and a minimum total dominating set of $R_{2}(Q_{n})$ and a minimum total dominating set of $J - v$ form a total dominating set of $G - v$ with $3n+5$ vertices.

If $v \in L_{1}(Q_{1}) \cup L_{2}(Q_{1})$, then there exists two adjacent vertices in $L_{1}(Q_{1}) \cup L_{2}(Q_{1})$ which totally dominates $L_{0}(Q_{1}) \cup L_{1}(Q_{1}) \cup L_{2}(Q_{1}) \cup L_{3}(Q_{1}) - v$, denote this two adjacent vertices by $D^{*}$. Thus $D_{0, l} \cup D^{*} \cup D_{2, l} \cup \dots \cup D_{n, l} \cup D_{n+1, l}$ is a total dominating set of $G - v$ with $3n+5$ vertices.

If $v \in L_{3}(Q_{n}) \cup L_{4}(Q_{n})$, then there exists two adjacent vertices in $L_{3}(Q_{n}) \cup L_{4}(Q_{n})$ which totally dominates $L_{2}(Q_{n}) \cup L_{3}(Q_{n}) \cup L_{4}(Q_{n}) \cup L_{5}(Q_{n}) - v$, denote this two adjacent vertices by $S^{*}$. Thus $D_{0, r} \cup D_{1, r} \cup \dots \cup D_{n-1, r} \cup S^{*} \cup D_{n+1, r}$ is a total dominating set of $G - v$ with $3n+5$ vertices.

Suppose that $v \in L_{0}(Q_{i}) \cup L_{1}(Q_{i}) \cup L_{2}(Q_{i})$ with $i \geq 2$. Thus $D_{0, r} \cup \dots \cup D_{i-2, r}$ and two adjacent vertices in $R_{2}(Q_{i-1})$ and two adjacent vertices in $L_{1}(Q_{i}) \cup L_{2}(Q_{i})$ which totally dominates $L_{0}(Q_{i}) \cup L_{1}(Q_{i}) \cup L_{2}(Q_{i}) \cup L_{3}(Q_{i}) - v$ and $D_{i+1,l} \cup \dots \cup D_{n, l} \cup D_{n+1, l}$ form a total dominating set of $G - v$ with $3n+5$ vertices.

Suppose that $v \in L_{3}(Q_{i}) \cup L_{4}(Q_{i}) \cup L_{5}(Q_{i})$ with $i \leq n-1$. Thus $D_{0, r} \cup D_{1, r} \cup \dots D_{i-1, r}$ and two adjacent vertices in $L_{3}(Q_{i}) \cup L_{4}(Q_{i})$ which totally dominates $L_{2}(Q_{i}) \cup L_{3}(Q_{i}) \cup L_{4}(Q_{i}) \cup L_{5}(Q_{i}) - v$ and two adjacent vertices in $L_{2}(Q_{i+1})$ and $D_{i+2, l} \cup \dots \cup D_{n+1, l}$ form a total dominating set of $G - v$ with $3n+5$ vertices.

Hence, for any vertex $v$ in $V$, we have that $\gammaup_{t}(G - v) \leq 3n+5$, and then $G$ is a $(3n+6)$-$\gammaup_{t}$-vertex-critical graph.
\end{proof}

We can adopt the similar technique to prove that $R \bullet Q^{(n)} \bullet R \bullet R$ is $(3n+7)$-$\gammaup_{t}$-vertex-critical, so we omit the details of the proof.
\begin{theorem}%
The graph $R \bullet Q^{(n)} \bullet R \bullet R$ is a $(3n+7)$-$\gammaup_{t}$-vertex-critical graph with diameter $5n + 9$.
\end{theorem}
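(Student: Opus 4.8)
The plan is to prove the two claimed properties of $G = R \bullet Q^{(n)} \bullet R \bullet R$ separately: that $\gammaup_{t}(G) = 3n+7$ together with vertex-criticality, and that $\diam(G) = 5n+9$. The diameter computation is the easy part: each copy of $Q$ has $\diam(Q) = 5$ and the $\bullet$-operation identifies a \textsc{Right}-vertex with a \textsc{Left}-vertex, so distances add along the chain with an overlap of one vertex at each gluing. Since $R$ has diameter $3$ and $J$-type pieces are replaced here by two further copies of $R$ at the right end, one adds $3 + 5n + 3 + 3$ minus the appropriate overlaps; the careful bookkeeping with the layer sets $L_k$ and $R_k$ of each pointed graph gives $5n+9$, and the fact that the \textsc{Left}-vertex of the first $R$ and the \textsc{Right}-vertex of the last $R$ realize this distance follows because they were diametrical in their own factors and every internal vertex on a realizing path stays in ``its'' factor.

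For the total domination number, the strategy mirrors the proof of the previous theorem (the one for $R \bullet Q^{(n)} \bullet J$). First I would establish the lower bound $\gammaup_{t}(G) \geq 3n+7$ by the same inductive/telescoping argument applied to $\mathfrak{C}_n = R \bullet Q^{(n)}$: given a minimum total dominating set $D$ of $G$, either $D \cap V(\mathfrak{C}_n)$ already totally dominates $\mathfrak{C}_n$, contributing at least $3n+3$ and leaving at least $4$ more needed for the two trailing copies of $R$ (which glue to form a diameter-$6$ tail requiring $4$ vertices to totally dominate, by \autoref{coalescence} applied to $R \bullet R$ or a direct check using $\gammaup_t(R)=3$, $\gammaup_t(R-N[\textsc{Right}])\ge 2$), or else $D\cap V(\mathfrak C_n)$ fails to totally dominate $\mathfrak C_n$, in which case $D$ restricted to the $R\bullet R$ tail must totally dominate it after the gluing vertex is removed, and one splits into the cases $x\in D$ / $x\notin D$ exactly as in the previous proof, using the established facts $\gammaup_t(\mathfrak C_n - y_n)\ge 3n+2$ and $\gammaup_t(\mathfrak C_n - N[y_n])\ge 3n+2$. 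Each branch yields at least $3n+7$. For the matching upper bound I would exhibit an explicit total dominating set of size $3n+7$, namely the union of the ``right-leaning'' minimum total dominating sets $D_{0,r}\cup D_{1,r}\cup\dots\cup D_{n,r}$ of $R, Q_1,\dots,Q_n$ with appropriate $2$-vertex and $3$-vertex sets covering the two trailing $R$'s.

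Then for vertex-criticality I would check, for an arbitrary vertex $v$, that $\gammaup_t(G-v)\le 3n+6$ by the same case analysis used before: if $v$ lies in the leading $R$, use a minimum total dominating set of $R-v$ glued to the right-leaning sets of everything to its right; if $v$ lies in one of the trailing $R$'s, use the right-leaning sets up to $Q_{n}$ together with a minimum total dominating set of $R_2(Q_n)$ (two adjacent vertices) and a minimum total dominating set of the relevant $R - v$; if $v$ lies in the ``left half'' $L_0(Q_i)\cup L_1(Q_i)\cup L_2(Q_i)$ or ``right half'' $L_3(Q_i)\cup L_4(Q_i)\cup L_5(Q_i)$ of some $Q_i$, use the local fact that two adjacent vertices in $L_1(Q_i)\cup L_2(Q_i)$ (resp.\ $L_3(Q_i)\cup L_4(Q_i)$) totally dominate $L_0\cup L_1\cup L_2\cup L_3$ (resp.\ $L_2\cup L_3\cup L_4\cup L_5$) minus $v$, and splice left-leaning sets on one side with right-leaning sets on the other. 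The main obstacle is not any single step but the sheer volume of case-splitting, and in particular getting the boundary behavior at the two trailing copies of $R$ right: one must verify that $R\bullet R$ with the shared vertex treated as an internal (unassigned) vertex still admits the needed ``leaning'' total dominating sets of the right sizes and still absorbs the removal of any of its vertices without exceeding $3n+6$ globally. Since the paper says the argument is analogous to the $R\bullet Q^{(n)}\bullet J$ case, I would present the lower bound and the explicit extremal set in full and then indicate that the criticality cases are verified exactly as above, omitting the repetitive details.
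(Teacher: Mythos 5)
Your proposal is correct and follows essentially the same route as the paper, which in fact omits this proof entirely and merely states that the technique from the preceding theorem ($R \bullet Q^{(n)} \bullet J$) adapts; your outline is precisely that adaptation, with the lower bound via the inductive facts about $\mathfrak{C}_n$, an explicit extremal set, and the splice of left-leaning and right-leaning dominating sets for criticality. (Only minor phrasing quibbles: coalescence adds diameters exactly, so $3+5n+3+3=5n+9$ with nothing to subtract, and the relevant fact about the trailing $R\bullet R$ is that at least $4$ vertices of $D$ must lie in it beyond the gluing vertex, since $\gammaup_{t}(R\bullet R)=5$.)
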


\begin{theorem}%
For every integer $k \geq 4$, there are infinitely many graphs that are $k$-$\gammaup_{t}$-vertex-critical with diameter $\left\lfloor \frac{5k - 7}{3} \right\rfloor$.
\end{theorem}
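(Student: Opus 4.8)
The plan is to combine the constructions already assembled in this section with the diameter upper bound of Theorem~\ref{TDIAM} to pin down the extremal value exactly for each residue of $k$ modulo $3$. First I would observe that Theorem~\ref{TDIAM} gives $\diam(G) \leq \frac{5k-7}{3}$ for every connected $k$-$\gammaup_{t}$-vertex-critical graph without leaf vertices, and since the diameter is an integer this immediately yields $\diam(G) \leq \left\lfloor \frac{5k-7}{3} \right\rfloor$. So the entire content of the theorem is the matching lower bound: exhibiting, for each $k \geq 4$, an infinite family of $k$-$\gammaup_{t}$-vertex-critical graphs achieving diameter exactly $\left\lfloor \frac{5k-7}{3} \right\rfloor$. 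The infinitude in each case is free once we have one construction, because the building blocks $R$, $Q$, $J$ all depend on a free parameter $m$ (or $t$) that can be taken arbitrarily large without changing the total domination number or the diameter.

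The main step is a case analysis on $k \bmod 3$, using the three families already built. If $k = 3n+6$, take $G = R \bullet Q^{(n)} \bullet J$, which the preceding theorem shows is $(3n+6)$-$\gammaup_{t}$-vertex-critical with diameter $5n+7$; one checks $5n+7 = \left\lfloor \frac{5(3n+6)-7}{3} \right\rfloor = \left\lfloor \frac{15n+23}{3} \right\rfloor = 5n + \left\lfloor \frac{23}{3} \right\rfloor = 5n+7$, so this case is exactly extremal. If $k = 3n+7$, take $G = R \bullet Q^{(n)} \bullet R \bullet R$, which is $(3n+7)$-$\gammaup_{t}$-vertex-critical with diameter $5n+9$, and $\left\lfloor \frac{5(3n+7)-7}{3} \right\rfloor = \left\lfloor \frac{15n+28}{3} \right\rfloor = 5n+9$; again extremal. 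The remaining residue class is $k \equiv 2 \pmod 3$, i.e. $k = 3n+5$ (with the smallest relevant value being handled directly): here I would use the graph $\mathfrak{C}_{n} = R \bullet Q^{(n)}$ itself, or a variant such as $R \bullet Q^{(n)} \bullet R$, verifying via Theorem~\ref{coalescence} together with the computed values $\gammaup_{t}(R \bullet Q^{(n)}) = 3n+3$, $\gammaup_{t}(R \bullet Q^{(n)} - y) = 3n+2$, and $\gammaup_{t}(R \bullet Q^{(n)} - N[y]) = 3n+2$ that the coalescence remains vertex-critical, and computing its diameter. For small $k$ not covered by the generic $n \geq 0$ range (e.g. $k = 4, 5$), the graphs $Q$, $J$, and the direct constructions $A$ and related pointed graphs furnish explicit witnesses whose diameters one checks by hand.

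The bookkeeping obstacle — and the part I expect to require the most care — is making the residue classes and the small-case boundary fit together cleanly, since the three families naturally produce $k$ of the forms $3n+5$, $3n+6$, $3n+7$ with $n \geq 0$ or $n \geq 1$, and one must confirm that together these cover every integer $k \geq 4$ and that in each case the diameter of the chosen graph equals $\left\lfloor \frac{5k-7}{3}\right\rfloor$ rather than falling one short. Concretely, I would tabulate: for $k \equiv 0 \pmod 3$, write $k = 3n+6$ with $n \geq -2$, i.e. $k \geq 6$ gives $n \geq 0$ and $R \bullet Q^{(n)} \bullet J$ works, while $k$ cannot be $3$ (excluded by $k \geq 4$); for $k \equiv 1 \pmod 3$, write $k = 3n+7$ with $n \geq 0$ covering $k \geq 7$, and handle $k = 4$ separately with an explicit graph of diameter $\left\lfloor 13/3 \right\rfloor = 4$; for $k \equiv 2 \pmod 3$, write $k = 3n+5$ covering $k \geq 5$ with the $\mathfrak{C}_{n}$-based family, and note $k = 2$ is excluded. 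The only genuinely new verification beyond citing earlier theorems is the vertex-criticality and diameter of whatever graph is used for the $k \equiv 2$ class; everything else is assembled from results already proved, and the infinitude in every case follows by letting the internal parameter of $R$, $Q$, or $J$ grow.
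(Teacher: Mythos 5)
Your overall architecture matches the paper's: a case analysis on $k \bmod 3$, using $R \bullet Q^{(n)} \bullet J$ for $k = 3n+6$, $R \bullet Q^{(n)} \bullet R \bullet R$ for $k = 3n+7$, a separate witness for $k=4$, and infinitude coming from the free parameter inside the building blocks. The floor-value computations you give for those two classes are correct and are exactly what the paper checks.

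The gap is in the $k \equiv 2 \pmod 3$ class. Your first candidate, $\mathfrak{C}_{n} = R \bullet Q^{(n)}$, cannot work: its total domination number is $3n+3 \equiv 0 \pmod 3$, so it does not even lie in the residue class $k = 3n+5$ you are trying to realize; worse, $\diam(\mathfrak{C}_{n}) = 5n+3$ exceeds the bound $\frac{5(3n+3)-7}{3} = 5n + \frac{8}{3}$ of Theorem~\ref{TDIAM}, so $\mathfrak{C}_{n}$ is provably \emph{not} vertex-critical. Your fallback, $R \bullet Q^{(n)} \bullet R$, does have the right diameter $5n+6$ and plausibly the right total domination number $3n+5$, but establishing its vertex-criticality is precisely the kind of lengthy induction carried out for $R \bullet Q^{(n)} \bullet J$, and you leave it undone while calling it ``the only genuinely new verification.'' That verification is the entire content of this case, so the proof is incomplete there. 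The paper avoids the issue altogether by taking $A \bullet Q^{(n)} \bullet A$ for $k = 3n+5$ and citing Theorem~13 of Goddard \etal\ \cite{Goddard2004}, where that graph is already shown to be $(3n+5)$-$\gammaup_{t}$-vertex-critical with diameter $5n+6$. If you want a self-contained argument you must either supply the induction for $R \bullet Q^{(n)} \bullet R$ (or $A \bullet Q^{(n)} \bullet A$), or invoke the known result as the paper does.
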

\begin{proof}%
We divide the graphs into four classes according to the value of $k$.
 \begin{enumerate}[label=(\arabic*)]%
\item Suppose that $k \equiv 2 \pmod{3}$ and $k = 3n + 5$. Notice that the graph $A  \bullet  Q^{(n)} \bullet  A$ is a $(3n + 5)$-$\gammaup_{t}$-vertex-critical graph and $\diam(A  \bullet  Q^{(n)} \bullet  A) = 5n + 6 = \frac{5k - 7}{3}$, which has been proved in \cite[Theorem 13]{Goddard2004}.

\item Suppose that $k \equiv 0 \pmod{3}$ and $k = 3n + 6$. Notice that the graph $R  \bullet  Q^{(n)}  \bullet  J$ is a $(3n + 6)$-$\gammaup_{t}$-vertex-critical graph and $\diam(R  \bullet  Q^{(n)}  \bullet  J) = 5n + 7 = \left\lfloor \frac{5k - 7}{3} \right\rfloor$.

\item Suppose that $k \equiv 1 \pmod{3}$ and $k = 3n + 7$. Notice that the graph $R  \bullet  Q^{(n)}  \bullet  R \bullet R$ is a $(3n + 7)$-$\gammaup_{t}$-vertex-critical graph and $\diam(R  \bullet  Q^{(n)}  \bullet  R \bullet R) = 5n + 9 = \left\lfloor \frac{5k - 7}{3} \right\rfloor$.

\item If $k = 4$, then the graph $J$ meet the requirement by \autoref{coalescence}.\qedhere
\end{enumerate}
\end{proof}

\begin{remark}%
As in \cite{MR2917914}, the upper bound in \autoref{IDIAM} is sharp. We provide infinitely many $k$-$i$-vertex-critical graphs with diameter $2(k-1)$ for each $k \geq 2$. For instance, let $B$ be the complete graph on $2t$ vertices with a perfect matching removed, and let $G$ be the graph whose block graph is a path on $k-1$ vertices and every block is a copy of $B$; notice that $i(G) = k$ and $\diam(G) = 2(k-1)$.
\end{remark}
\begin{remark}%
So far, we don't know if the given upper bound on the $k$-$\gammaup_{c}$-vertex-critical graphs is best possible.
\end{remark}

\vskip 3mm \vspace{0.3cm} \noindent{\bf Acknowledgments.} This project was supported by the National Natural Science Foundation of China (11026078). The author would like to express heartfelt thanks to Edwards and MacGillivray for providing us a pdf-file of the paper \cite{MR2917914}. In addition, the author would like to thank the anonymous reviewers for their valuable comments.

\end{document}